\documentclass[a4paper, 11pt,reqno]{amsart}
\title[QUP for Gabor Transform]{Qualitative Uncertainty Principle for \linebreak Gabor Transform on Certain Locally \linebreak Compact Groups}
\markboth
\pagestyle{plain}
\usepackage[ansinew]{inputenc}
\usepackage{yfonts}
\usepackage{amscd}
\usepackage{amssymb, latexsym}
\usepackage{mathrsfs}
\usepackage{fancyhdr}
\usepackage{enumerate}
\usepackage{mathtools}
\usepackage{setspace}
\usepackage{latexsym}
\newtheorem{thm}{Theorem}[section]
\newtheorem{rem}[thm]{Remark}

\newtheorem{lem}[thm]{Lemma}
\newtheorem{prop}[thm]{Proposition}

\allowdisplaybreaks
\author{JYOTI SHARMA}
\address{Department of Mathematics, University of Delhi,  Delhi-110007, India}
\email{jsharma3698@gmail.com}

\author{AJAY KUMAR}
\address{Department of Mathematics, University of Delhi, Delhi-110007, India}
\email{akumar@maths.du.ac.in}
\subjclass[2010]{Primary 43A30; Secondary 22D99, 22E25.}

\keywords{Qualitative uncertainty principle, Fourier transform, continuous Gabor transform, Heisenberg group.}

\begin{document}
\begin{abstract}
 Classes of locally compact groups having qualitative uncertainty principle for Gabor transform have been investigated. These include Moore groups, Heisenberg Group $\mathbb{H}_n, \mathbb{H}_{n} \times D,$ where $D$ is discrete group and other low dimensional  nilpotent Lie  groups.
\end{abstract}
\maketitle

\section{Introduction}
\noindent For a  second countable, unimodular group  G of type I,  let $ m $ be the left Haar measure and $\mu$  the Plancherel measure on the dual object $\widehat{G}.$ Let $H$ be closed normal subgroup  of $G$, we identify a representation $\pi$ of $G/H$ with its pullback to $G$, so  $\widehat{G/H}$ will always considered as a closed subset of $\widehat{G}$.  Let 
\[
\mathcal{H}_{(x,\pi)}= \pi(x)\text{HS}(\mathcal{H}_{\pi})
\]
 where $\pi(x)\text{HS}(\mathcal{H}_{\pi})={\{\pi(x)T:T \in \text{HS}(H_{\pi})\}} $, $\mathcal{H}_{(x,\pi)}$ is a Hilbert Space with the inner product given by 
 \[ 
\left\langle \pi(x)T,\pi(x)S \right\rangle_{\mathcal{H}_{(x,\pi)}} = tr(S^*T) = \langle T,S\rangle_{\mathcal{H}_{(x,\pi)}}
 \] 
 Also, $\mathcal{H}_{(x,\pi)} = \text{HS}(\mathcal{H}_{\pi})$ for all $(x,\pi)\in G \times \widehat{G}.$ The family $\{\mathcal{ H}_{(x,\pi)}\}_{(x,\pi)\in G \times \widehat{G}}$ of Hilbert spaces is a field of Hilbert spaces over $G\times \widehat{G}$. Let $\mathcal{H}^2(G\times \widehat{G})$ denote the direct integral of $\{\mathcal{ H}_{(x,\pi)}\}_{(x,\pi)\in G \times\widehat{G}}$, that is, the space of all measurable vector Fields $F \text{ on } G\times\widehat{G}$ such that 
 \[  ||F||^2_{H^2(G\times \widehat{G})} = \int_{G\times\widehat{G}} || F(x,\pi)||^2_{(x,\pi)} dxd\pi < \infty.
 \]
  $H^2(G\times \widehat{G})$ is a Hilbert space with the inner product given by 
 \[
 \langle F,K \rangle_{H^2(G\times \widehat{G})} = \int_{G\times \widehat{G}} tr[F(x,\pi)K(x,\pi)^*]dxd\pi.
  \]
  
  \noindent For $f \in C_c(G)$, the set of all continuous functions on $G$ with compact supports and $\psi$ a fixed non-zero function in $L^2(G),$ usually called window function. The continuous Gabor transform of $ f $ with respect to $\psi$ can be defined as a measurable field of operators on $G \times \widehat{G}$ given by
 \begin{equation}\label{eq1}
  G_{\psi}f(x,\pi) = \int_{G} f(y)\overline{\psi(x^{-1}y)}\pi(y)^* dm(y)
 \end{equation}
    
 The operator valued integral \eqref{eq1} is considered in the weak sense, i.e., for each $(x,\pi) \in G \times \widehat{G}$ and $\xi,\eta \in H_{\pi},$ we have that
 \[ \langle G_{\psi}f(x,\pi)\xi,\eta \rangle = \int_G f(y)\overline{\psi(x^{-1}y)}\langle \pi(x)^*\xi,\eta \rangle dm(x)d\mu(\pi).
 \]
 For each $x \in G,$ let $f_{x}^{\psi}: G \to \mathbb{C}$ by 
 \begin{align}
 f_{x}^{\psi}(y) =  f(y)\overline{\psi(x^{-1}y)}.
 \end{align}
 Since $f \in C_c(G)$ and $\psi \in L^2(G),\text{ Therefore } f_x^{\psi}\in L^1(G)\cap
L^2(G)  \text{ for all }  x\in G$. The Fourier transform of $f^{\psi}_x$ is given by 
\[ \widehat{f_x^{\psi}}(\pi) = \int_G f_x^{\psi}(y)\pi(y)^* dy = \int_G f(y)\overline{\psi(x^{-1}y)}\pi(y)^* dy = G_{\psi}f(x,\pi). 
\] 
Also, using the Plancherel Formula \cite{folland}, it follows that $\widehat{f_x^{\psi}}$is a Hilbert Schmidt operator for almost  all $\pi \in \widehat{G}.$ Therefore, $G_{\psi}f(x,\pi)$ is a Hilbert Schmidt operator for all $x \in G$. Also for almost all $\pi \in \widehat{G}\text{  and } f,\psi \in L^2(G)$ \cite{ashish1}, we have 
\[ ||G_{\psi}f||_{H^2(G\times \widehat{G})} = ||\psi||_2||f||_2. \]

\noindent We investigate several classes of locally compact groups for the following  so called Qualitative Uncertainty Principle (QUP) for Gabor transform :

 \noindent If  $f \in L^2(G)\text{ and }\psi$  is a window function satisfying 
 $$ m\times \mu{\{ (x,\pi):G_{\psi}f(x,\pi)\neq 0\}< \infty, \text{ then } f = 0 \text{ a.e.}} $$ 
 
\noindent In \cite{ashish}, QUP was shown to hold for various class of locally compact groups like abelian groups, compact extensions of groups having QUP.

\noindent  In section 2 we shall prove QUP for  Gabor transform  holds for groups of the form $ H \times D, D$ being a discrete group and $H$ is a group for which QUP holds.
 In section 3 we show that QUP  hold for certain  classes of nilpotent Lie groups including Heisenberg group and other low dimensional nilpotent Lie group. In section 4, a weaker version of QUP has been  discussed for Moore groups. In last section we give necessary condition for  QUP  to hold for Gabor transform for multiplier extension of $\mathbb{T}$ by abelian group.  
    
\section{QUP for Gabor Transform}
\noindent In this section, $G$ will be second countable, unimodular, locally compact group of type I. For $\pi \in \widehat{G}, \mathcal{H_{\pi}} $ will denote the Hilbert space of $\pi.$

  \begin{thm}\label{thm1}
  Let $H$ be a subgroup of finite index in $G$ then  $H$ has QUP for Gabor transform if and only if  $G$ has QUP for Gabor transform.
  \end{thm}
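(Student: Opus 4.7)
The plan is to first reduce to the case where $H$ is normal in $G$. If $H$ is an arbitrary subgroup of finite index, let $N=\bigcap_{g\in G}gHg^{-1}$ be its normal core; by Poincar\'e's theorem $N$ is normal of finite index in both $G$ and $H$. Proving the biconditional for normal finite-index subgroups and applying it twice (to $N\trianglelefteq G$ and $N\trianglelefteq H$) yields the general statement.

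For the direction $H$ QUP $\Rightarrow G$ QUP with $H$ normal of finite index, the quotient $G/H$ is finite and hence compact, so $G$ is a compact extension of $H$, and the conclusion follows from the compact-extension theorem for the Gabor QUP established in \cite{ashish}.

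For the converse $G$ QUP $\Rightarrow H$ QUP (again with $H$ normal of finite index), let $f,\psi\in L^{2}(H)$ with $\psi$ a window and assume $m_H\times\mu_H\{(x,\sigma)\in H\times\widehat H:G_{\psi}^{H}f(x,\sigma)\neq 0\}<\infty$. Extend $f,\psi$ by zero to $\widetilde f,\widetilde\psi\in L^{2}(G)$. Since $y,\,x^{-1}y\in H$ forces $x\in H$ (disjoint cosets), a direct calculation gives $G_{\widetilde\psi}^{G}\widetilde f(x,\pi)=0$ for $x\notin H$ and
\[
G_{\widetilde\psi}^{G}\widetilde f(x,\pi)=\int_{H}f(y)\overline{\psi(x^{-1}y)}\pi(y)^{*}\,dm_H(y)\quad\text{for }x\in H.
\]
Because $H$ is normal open of finite index, Clifford's theorem decomposes $\pi|_{H}$ as a finite direct sum $\bigoplus_{\sigma\in O_\pi}\sigma^{\oplus m(\pi)}$ for some $G$-orbit $O_\pi\subset\widehat H$ of size at most $[G:H]$. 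Under this decomposition the above integral becomes block-diagonal with blocks $G_{\psi}^{H}f(x,\sigma)$, so $G_{\widetilde\psi}^{G}\widetilde f(x,\pi)\neq 0$ iff $G_{\psi}^{H}f(x,\sigma)\neq 0$ for some $\sigma\in O_\pi$. Applying the Mackey--Clifford Plancherel correspondence (the pushforward of $\mu_G$ under the restriction map $\widehat G\to\widehat H/G$ is equivalent to the quotient of $\mu_H$, with a Radon--Nikodym derivative bounded by $[G:H]$), one concludes
\[
m_G\times\mu_G\{(x,\pi):G_{\widetilde\psi}^{G}\widetilde f(x,\pi)\neq 0\}\leq [G:H]\cdot m_H\times\mu_H\{(x,\sigma):G_{\psi}^{H}f(x,\sigma)\neq 0\}<\infty.
\]
Invoking QUP for the Gabor transform on $G$ then forces $\widetilde f=0$, hence $f=0$ on $H$.

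The main obstacle will be making the Plancherel compatibility rigorous: verifying that the restriction map $\widehat G\to\widehat H/G$ is measurable with pushforward equivalent to the $G$-quotient of $\mu_H$, and that the Radon--Nikodym derivative is controlled by $[G:H]$. For second countable unimodular type I groups with $H$ open normal of finite index this is standard Mackey machinery, but the measure-theoretic bookkeeping is the only nontrivial ingredient in the converse direction.
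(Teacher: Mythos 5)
Your proposal is correct and follows essentially the same route as the paper: reduce to the normal case via a finite-index normal subgroup, prove $G\Rightarrow H$ by extending $f,\psi$ by zero and comparing the Plancherel measures of $\widehat{G}$ and $\widehat{H}$ (the paper gets the needed inequality, with constant $[G:H]^3$, directly from the Kleppner--Lipsman Plancherel formula \cite{klep} rather than from your Clifford-theoretic sketch, but only finiteness matters), and prove $H\Rightarrow G$ by viewing $G$ as a compact (finite) extension and invoking the compact-extension result of \cite{ashish}.
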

  
  \begin{proof}
  
 Let QUP for Gabor transform hold for $G$.  $H$ contains a subgroup $N$ of finite index which is normal in $G$. Since $ H/N $ is finite, so by \cite{ashish} we can assume $H$ to be normal. Let $ f \in L^2(H)$ and $ \psi $ be a window function such that, 
 \[ m\times \mu \{(x,\pi)\in H\times\widehat{H}: G_\psi f(x,\pi) \neq 0\} < \infty. 
\]
So, there exist a zero set $K \text{ in } H$ such that for all $ x \in H\setminus K ,$
\[ \mu {\{ \pi : G_{\psi}f(x,\pi)\neq 0 \}} < \infty. \]
Define $F(x) = f(x)\chi_{H}(x) \text{ and } \Psi(x)= \psi(x)\chi_{H}(x),\text { for all } x \in G.  $ Then
\begin{align*}
G_\Psi F(x,\pi)= \int_H f(y)\overline{\psi(x^{-1}y)}\chi_{H}(x^{-1}y)\pi(y)^*dy. 
\end{align*}
Clearly if  $x  \notin H, \text{ then }  G_\Psi F(x,\pi)=0.$  Now for  $ x\in H, \text{ define } F_x^\Psi(y) = f_{x}^{\psi}(y)\chi_{H}(y),  \text{for all } y \in G $. Fix $x \in  H\setminus K $ and  using the Plancherel formula \cite{klep},  it follows
 \begin{equation}\label{eq2}
 \int_{\widehat{G}} \chi_{\{\pi \in \widehat{G}: G_\Psi F(x,\pi) \neq 0 \}}(\pi)d\pi \leq [G:H]^3 \int_{\widehat{H}}\chi_{\{ \pi \in \widehat{H} : G_\psi f(x,\pi)\neq 0 \}}(\pi)d\pi.
\end{equation}  

 \noindent Equation \eqref{eq2} holds for  every  $x \in H\setminus K.$ Therefore, we have 
     \begin{align*}
       &\int_H\int_{\widehat{G}} \chi_{\{\pi \in \widehat{G}: G_\Psi F(x,\pi) \neq 0 \}}(x,\pi) dxd\pi \\&\leq [G:H]^3\int_H\int_{\widehat{H}}\chi_{\{ \pi \in \widehat{H} : G_\psi f(x,\pi)\neq 0 \}}(x,\pi)dxd\pi < \infty.
     \end{align*}
    \noindent So, it follows \[  \int_H\int_{\widehat{G}}\chi_{\{(x,\pi)\in H\times\widehat{G}: G_\Psi F(x,\pi)\neq0\}}(x,\pi)dxd\pi  < \infty,\]
  and therefore we have \[   \int_G\int_{\widehat{G}}\chi_{\{(x,\pi)\in G\times\widehat{G}: G_\Psi F(x,\pi)\neq0\}}(x,\pi)dxd\pi \\
  < \infty. \]  

\noindent Since $ G$  has QUP for Gabor transform Therefore  $F=0$ a.e. and consequently   $ f = 0$ a.e.

\noindent Conversely  let $H$ be a subgroup of finite index and has QUP for Gabor transform. 
 Also $H$ has a subgroup $N$ of finite index normal in $G$. Since $H$ has QUP for Gabor transform and $N$ has finite index in $H$, therefore $N$ has QUP for Gabor transform.
As $G/N$  is  compact, So by \cite{ashish}, $G$ has QUP for Gabor transform.
  \end{proof}
  
\begin{rem}
If $G$ is a group having all irreducible representation bounded i.e. there exist $M > 0$ such that dim $\pi \in \widehat{G} $ then by \cite{moore}, $G$ has an abelian subgroup H of finite. Thus it follows from above theorem that $G$ has QUP for Gabor transform if and only if $G_{0}$ is non compact.  
\end{rem}

\noindent Next, we consider groups of the form  $ G = H\times D,  $ where $H$  be a second countable, unimodular, locally compact group of type I and $D$ be a discrete type I group. We now consider Wiener amalgam space \cite{four}, which is defined as follows:
\[
 W(L^2,l^1)(G)= \{ f:G \to \mathbb{C}| f \text{ is measurable, } \sum\limits_{t \in D}\left(\int\limits_H| f(x,t)| ^2dx\right)^{\frac{1}{2}} < \infty \}.
\]
 Note that $W(L^2,l^1)(G) = L^2(G)$ if $D$ is finite.
\begin{lem} \label{pro5}
$W(L^2,l^1)(G)\subseteq L^2(G)$. However the containment may be strict.
\end{lem}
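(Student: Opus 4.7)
The plan is to reduce the inclusion to the elementary fact that $\ell^1(D)$ embeds continuously into $\ell^2(D)$. For any $f \in W(L^2,l^1)(G)$, set $a_t := \bigl(\int_H |f(x,t)|^2\, dx\bigr)^{1/2}$, so by hypothesis the nonnegative sequence $(a_t)_{t \in D}$ lies in $\ell^1(D)$. Since $G = H \times D$ carries the product of the Haar measure on $H$ with counting measure on $D$, Tonelli's theorem gives
\[
\|f\|_{L^2(G)}^2 \;=\; \sum_{t \in D} \int_H |f(x,t)|^2\, dx \;=\; \sum_{t \in D} a_t^2.
\]
Using the elementary inequality $\sum_t a_t^2 \le \bigl(\sum_t a_t\bigr)^2$ for nonnegative sequences, I obtain $\|f\|_{L^2(G)} \le \|f\|_{W(L^2,l^1)(G)} < \infty$, which proves the inclusion.

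For the strictness assertion, I will exhibit a concrete $f \in L^2(G) \setminus W(L^2,l^1)(G)$ in the only interesting case, namely when $D$ is infinite (the paper already observes that the two spaces agree when $D$ is finite). Fix any $g \in L^2(H)$ with $\|g\|_2 = 1$ and enumerate a countable subset $\{t_n\}_{n \ge 1} \subseteq D$. Define
\[
f(x, t_n) \;=\; \tfrac{1}{n}\, g(x), \qquad f(x,t) \;=\; 0 \text{ for } t \notin \{t_n\}.
\]
Then $a_{t_n} = 1/n$, so $\sum_n a_{t_n}^2 = \sum_n n^{-2} < \infty$ while $\sum_n a_{t_n} = \sum_n n^{-1} = \infty$. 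Hence $f \in L^2(G) \setminus W(L^2,l^1)(G)$, and the inclusion is strict.

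Neither step presents a genuine obstacle: both rest on standard comparisons between $\ell^1$ and $\ell^2$ norms on the discrete factor. The only small point worth checking is measurability of the counterexample, which is automatic since $D$ is discrete and $f$ is built from a single $L^2$ function on $H$ rescaled along a sequence.
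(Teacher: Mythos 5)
Your proof is correct and takes essentially the same route as the paper: Tonelli reduces the inclusion to the embedding $\ell^1(D)\subseteq\ell^2(D)$ applied to the sequence $a_t$, which the paper establishes by splitting into the finitely many terms with $a_t\ge 1$ and the rest, whereas you use the cleaner inequality $\sum_t a_t^2\le\bigl(\sum_t a_t\bigr)^2$. Your counterexample for strictness is a harmless generalization (to any infinite $D$, with an arbitrary unit vector $g\in L^2(H)$) of the paper's concrete example $G=\mathbb{R}\times\mathbb{Z}$, $f(x,n)=n^{-1}$ for $x\in[0,1]$, $n\ge 1$.
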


\begin{proof}
For $ f \in W(L^2,l^1)(G)$ \text{ and } $t \in D,$ consider $b_t = \left(\int\limits_H | f(x,t)|^2 dx \right)^{\frac{1}{2}}$.
 Now  $ 0\leq b_t < \infty  $ for all $t \in D.\text{ Let } S = \{t: b_t\geq 1\}
 \text{ and } |S| = \sum\limits_{t\in S}1 \leq \sum\limits_{t\in S }b_t \leq \sum\limits_{t \in D}b_t <\infty $ which imply $b_t\geq 1$  for only finitely many $t$.
  Also $ b_t > 0 $ for at most countably many $t$. So we have $\sum\limits_{t\in D}\int\limits_H | f(x,t)|^2 dx  = \sum\limits_{t \in D}b_t^2 = \sum\limits_{t \in S}b_t^2 + \sum\limits_{t\in S'}b_t^2\leq \sum\limits_{t \in S}b_t^2 + \sum\limits_{t\in S'}b_t <\sum\limits_{t \in S}b_t^2 + \sum\limits_{t\in D}b_t <\infty $ which implies that $f \in L^2(G).$ 
\end{proof}

\noindent For the strict inclusion, consider $G =  \mathbb{R} \times \mathbb{Z} $ and 
\[ 
   f(x,n)=
       \begin{cases}
       \frac{1}{n}, & x\in [0,1]\text{ and } n\geq 1\\
       =0, & \text{ elsewhere }.
       \end{cases}
\]  
Clearly $f \in L^2(G) \text{ but } f \notin W(L^2,l^1)(G).$

\noindent We will say $ G \text{ has }  (QUP )'$ for Gabor transform if  $f,  \psi(\neq 0)  \in W(L^2,l^1)(G)$ such that 
 \[
  m_{G}\times \mu_{G}\{ (x,\pi): G_{\psi}f(x,\pi)\neq 0 \} < \infty 
  \]
   then $f = 0 $ a.e. 
 
  \noindent It may be noted that theorem \ref{thm1} remain valid for   $(QUP )'.$
\begin{thm}\label{th1}
If $H$ has QUP for Gabor transform then $G$  has $(\text{QUP})'$ for Gabor transform.
\end{thm}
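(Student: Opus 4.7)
The plan is to reduce $(\text{QUP})'$ on $G = H \times D$ to QUP on $H$ via a partial Fourier transform along the $D$-factor. For each $\pi_2 \in \widehat{D}$ set
\[
F_{\pi_2}(y) := \sum_{t \in D} f(y,t)\pi_2(t)^*, \qquad \Psi_{\pi_2}(y) := \sum_{t \in D} \psi(y,t)\pi_2(t)^*.
\]
The Wiener amalgam hypothesis $f,\psi \in W(L^2,l^1)(G)$ combined with Minkowski's inequality shows that $y \mapsto \sum_t|f(y,t)|$ and $y \mapsto \sum_t|\psi(y,t)|$ lie in $L^2(H)$, so these sums converge absolutely almost everywhere and $F_{\pi_2},\Psi_{\pi_2}\in L^2(H)$ (scalar valued when $D$ is abelian, $B(\mathcal{H}_{\pi_2})$-valued in general).

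The crux is the identity
\[
G_{\Psi_{\pi'_2}} F_{\pi_0}(x,\pi_1) \;=\; \sum_{s \in D}\overline{\pi'_2(s)}\,G_\psi f\bigl((x,s),\,\pi_1 \otimes \overline{\pi'_2}\pi_0\bigr),
\]
valid for any $\pi_0,\pi'_2 \in \widehat{D}$ (written here in the abelian case). I would establish it by expanding both sides, performing the $D$-substitution $t \mapsto t + s$, and applying Fubini. It yields the inclusion $\operatorname{supp}(G_{\Psi_{\pi'_2}} F_{\pi_0}) \subseteq \bigcup_{s \in D}\{(x,\pi_1) : G_\psi f((x,s),\pi_1 \otimes \overline{\pi'_2}\pi_0) \neq 0\}$; integrating the $m_H \times \mu_H$ measure over $\pi'_2 \in \widehat{D}$ and using the measure-preserving change of variable $\eta = \overline{\pi'_2}\pi_0$ gives
\[
\int_{\widehat{D}} (m_H \times \mu_H)\bigl(\operatorname{supp}(G_{\Psi_{\pi'_2}} F_{\pi_0})\bigr)\, d\mu_{\widehat{D}}(\pi'_2) \;\leq\; (m_G \times \mu_G)\bigl(\operatorname{supp}(G_\psi f)\bigr) \;<\; \infty.
\]

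Consequently, for $\mu_{\widehat{D}}$-a.e.~$\pi'_2$ the support of $G_{\Psi_{\pi'_2}} F_{\pi_0}$ has finite $m_H \times \mu_H$ measure. Since $\psi \neq 0$, Plancherel on $D$ gives $\int_{\widehat{D}} \|\Psi_{\pi'_2}\|_{L^2(H)}^2\, d\mu_{\widehat{D}} = \|\psi\|_{L^2(G)}^2 > 0$, so $\{\pi'_2 : \Psi_{\pi'_2} \neq 0\}$ has positive measure and meets the a.e.~set above. Pick $\pi'_{2,*}$ in the intersection; QUP on $H$ applied to $F_{\pi_0}$ with nonzero window $\Psi_{\pi'_{2,*}}$ forces $F_{\pi_0} = 0$. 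Since $\pi_0 \in \widehat{D}$ was arbitrary, $F_{\pi_2} \equiv 0$ for every $\pi_2$, and Plancherel on $D$ yields $f = 0$ a.e. The main obstacle is verifying the key identity cleanly when $D$ is non-abelian: the partial Fourier transforms become $B(\mathcal{H}_{\pi_2})$-valued and QUP on $H$ must be applied to each matrix coefficient of $F_{\pi_0}$; the abelian case, which already covers the examples $\mathbb{H}_n \times D$ studied in the next section, contains all the essential ideas.
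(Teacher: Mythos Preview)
Your argument in the abelian case is correct and is essentially the paper's proof, with only cosmetic reorganisation: where the paper first fixes a single character $\gamma$ with $\tilde\psi=\sum_t\psi(\cdot,t)\gamma(t)\neq 0$ and then shows that for almost every $\delta\in\widehat D$ the slice of $\operatorname{supp}(G_\psi f)$ at $\delta$ has finite measure, you instead fix $\pi_0$ and integrate over the window character $\pi'_2$. Your key identity is exactly the paper's formula $G_{\tilde\psi}\tilde f(x,\pi)=\sum_{t}G_\phi f(x,t,\pi,\delta)$ after the change of variables $\eta=\overline{\pi'_2}\pi_0$, and the final step (Plancherel on $D$ to recover $f=0$) is the same.

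The one place where you diverge from the paper is the non-abelian case. The paper does \emph{not} attempt a direct operator-valued argument: it invokes Thoma's theorem that a discrete type~I group is virtually abelian, so $D$ contains an abelian subgroup $A$ of finite index; then $H\times A$ has $(\text{QUP})'$ by the abelian case, and Theorem~\ref{thm1} (stability under finite-index extensions) transfers this to $H\times D$. This is considerably simpler than pushing through $B(\mathcal H_{\pi_2})$-valued partial Fourier transforms and verifying QUP coefficient by coefficient, and it sidesteps exactly the obstacle you flag. I would recommend replacing your non-abelian sketch with this two-line reduction.
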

 
 \begin{proof}
 
 First of all, we assume  $D$ is  abelian. 
 Let $f, \psi(\neq 0 )  \in W(L^2,l^1)(G)$ be such that  
 \[m_{G}\times \mu_{G}\{ (x,\pi): G_{\psi}f(x,\pi)\neq 0 \} < \infty. \] 
 By Minkowski  inequality, we obtain
 \[ \int_H\left(\sum\limits_{t \in D}| \psi(x,t)|\right)^2dx \leq \left( \sum\limits_{t \in D}\left(\int\limits_H| \psi(x)| ^2dx\right)^{\frac{1}{2}}\right)^2 < \infty.
 \]
 So, for almost every $x \in H, \sum\limits_{t \in D}| \psi(x,t)| < \infty.
\text{ For } \gamma \in \widehat{D}, \text{ define } \tilde{\psi}(y) = \sum\limits_{t \in D} \psi(y,t)\gamma(t), \text {for all } y \in H.$ Again by Minkowski  inequality, it follows that $\tilde{\psi} \in L^2(G)$. Since $\psi \neq 0$, therefore there exist $\gamma \in \widehat{D}$ such that $\tilde{\psi} \neq 0.$ For such  $\gamma \in \widehat{D}$, define $\phi (x,t) = \psi(x,t)\gamma(t), \text{ for } (x,t)\in H \times D$. Then $ \phi \in W(L^2,l^1)(G)$ and 
\begin{align*}
G_{\phi}f(x,t,\pi,\delta) &= \int_H\sum\limits_{s\in D} f(y,s)\overline{\phi(x^{-1}y,t^{-1}s)}\pi(y)^* \overline{\delta(s)}dy\\
                 &= \int_H\sum\limits_{s\in D} f(y,s)\overline{\psi(x^{-1}y,t^{-1}s)}\ \overline{\gamma(t^{-1}s)}\pi(y)^* \overline{\delta(s)}dy\\
                 &=\gamma(t)G_{\psi}f(x,t,\pi,\gamma\delta).
\end{align*} 
Thus, we have  $$\{(x,t,\pi,\delta):G_{\phi}f(x,t,\pi,\delta)\neq 0 \} = \{(x,t,\pi,\overline{\gamma}\delta): G_{\psi}f(x,t,\pi,\delta) \neq 0 \}.$$ which implies that 
\[m_{G}\times m_{\widehat{G}}\{(x,t,\pi,\delta):G_{\phi}f(x,t,\pi,\delta)\neq 0 \} < \infty. \]
 Therefore there exist a zero set $K \text{ in } \widehat{D} \text{ such that for all } \delta \in \widehat{D}\setminus K,$
 \begin{align}
 \iint\limits_{H \widehat{H}}\sum\limits_{s\in D} \chi_{\{(x,t,\pi):G_{\phi}f(x,t,\pi,\delta)\neq 0 \}}dxd\pi < \infty.
\end{align} 
 Now for each $\delta \in \widehat{D}\setminus K $, define $\tilde{f}(x)= \sum_{t\in D}f(x,t)\delta(t)$, for all $ x \in$ H. Then $\tilde{f}\in L^2(H)$ also
 \begin{align*}
  G_{\tilde{\psi}}\tilde{f}(x,\pi) &= \int\limits_{H } \tilde{f}(y)\overline{\tilde{\psi}(x^{-1}y)}\pi(y)^* dy\\
                    &=\int\limits_{H}\sum\limits_{s \in D }f(x,s)\delta(s)\sum\limits_{t \in D}\overline{\psi(x^{-1}y,t)}\ \overline{\gamma(t)}\pi(y)^*dy\\
                    &=\sum\limits_{t \in D}\int\limits_{H}\sum\limits_{s \in D }f(x,s)\delta(s)\overline{\psi(x^{-1}y,t^{-1}s)}\ \overline{\gamma(t^{-1}s)}\pi(y)^*dy\\
                    &=\sum\limits_{t \in D}G_{\phi}f(x,t,\pi,\delta).
  \end{align*}
  If $G_{\tilde{\psi}}\tilde{f}(x,\pi) \neq 0 \text{ then } G_{\phi}f(x,t,\pi,\delta) \neq 0 \text{ for all } t \in M \subset D, \text{ where } |M|\geq 1$ and 
\begin{align*}
m_H\times \mu_{H}{\{(x,\pi):G_{\tilde{\psi}}\tilde{f}(x,\pi) \neq 0\}} &= \iint\limits_{H \widehat{H}}\chi_{\{(x,\pi):G_{\tilde{\psi}}\tilde{f}(x,\pi)\neq 0 \}}(x,\pi)dxd\pi\\
   &\leq \iint\limits_{H \widehat{H}}\sum\limits_{s\in D} \chi_{\{(x,t,\pi):G_{\phi}f(x,t,\pi,\delta)\neq 0 \}}dxd\pi < \infty.
\end{align*}
Since $H$ has QUP for Gabor Transform, it follows that $\tilde{f}(x)= \sum\limits_{t\in D}f(x,t)\delta(t)= 0 \text{ a.e. for all } \delta \in \widehat{D}\setminus K$ which implies that  $f=0$ a.e.

\noindent Now if $D$ is discrete type-I, then $D$ has an abelian subgroup $A$ of finite index. So, $H \times A $ has $(QUP)'$ for Gabor transform and $H \times A$  has finite index in $H\times D$. So, using Theorem \ref{thm1} $H \times D$ has $(QUP)'.$ 
\end{proof}

\begin{rem}
  
 \noindent $(i)$ If $G =  H \times D $ where $D$ is discrete group such that $G$ has QUP for Gabor transform then so does $H$. Consider $f \in L^2(H),\text{ and } \psi$ be a window function such that
 \begin{align*}
  m_{H}\times\mu_{H}{\{(x,\pi):G_{\psi}f(x,\pi)\neq 0\}} <\infty. 
 \end{align*}
 Define $g(x,t) = f(x)\chi_{\{e\}}(t) \text{ and } \phi(x,t) = \psi(x)\chi_{\{e\}}(t)$.
 Then $g,\phi \in L^2(G)$ and  
 \[
   G_{\phi}g(x,t,\pi,\delta)=
           \begin{cases}
                G_{\psi}f(x,\pi),  & if\  t = e\\
                 0, & elsewhere
            \end{cases}  
\]
  Therefore,
  \begin{align*}
  & m_{G}\times \mu_{G} \{(x,t,\pi,\delta):G_{\phi}g(x,\pi)\neq 0 \}\\& = m_{H}\times \mu_{H}\{(x,\pi):G_{\psi}f(x,\pi)\neq 0\}m_{D}(\widehat{D})< \infty
\end{align*}  
which implies that $g = 0$ a.e. and hence  $f =0 $ a.e.

\noindent $(ii)$ If $G = H \times D, \text{ where } D$ is a finite group then by Theorem \ref{th1} one can see if $ H$ has QUP for Gabor transform then so does $G.$
\end{rem}
 
\begin{thm}\label{th2}
 Let $G$ be a noncompact, nondiscrete, unimodular type-I group.
 If $G$ has a compact open subgroup $H$, Then  QUP for Gabor transform does not hold for $G.$
 \end{thm}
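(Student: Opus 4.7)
The plan is to exhibit a nonzero $f\in L^2(G)$ and a nonzero window $\psi\in L^2(G)$ for which $G_\psi f$ is supported on a set of finite $m\times\mu$ measure. The natural choice is $f=\psi=\chi_H$; since $H$ is compact open, $0<m(H)<\infty$, so $\chi_H\in L^2(G)\setminus\{0\}$.

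A direct computation gives
$$G_\psi f(x,\pi) \;=\; \int_G \chi_H(y)\,\chi_H(x^{-1}y)\,\pi(y)^*\,dy \;=\; \chi_H(x)\,T_\pi,\qquad T_\pi:=\int_H\pi(y)^*\,dy,$$
because $H\cap xH$ equals $H$ when $x\in H$ and is empty otherwise. Hence the $x$-support of $G_\psi f$ is already contained in the compact set $H$, and the remaining task is to show that $\{\pi\in\widehat G: T_\pi\neq 0\}$ has finite Plancherel measure.

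The key observation is that $T_\pi = m(H)\,P_\pi^H$, where $P_\pi^H$ is the orthogonal projection onto the subspace $\mathcal{H}_\pi^H$ of $H$-fixed vectors. Unimodularity of the compact group $H$ yields $T_\pi^*=T_\pi$, and Fubini together with left-invariance of Haar measure on $H$ gives $T_\pi^{\,2}=m(H)\,T_\pi$, so $T_\pi/m(H)$ is an orthogonal projection whose range is precisely $\mathcal{H}_\pi^H$. Consequently $\|T_\pi\|_{\mathrm{HS}}^2=m(H)^2\dim\mathcal{H}_\pi^H$, and $T_\pi\neq 0$ iff $\dim\mathcal{H}_\pi^H\geq 1$.

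Finally, the Plancherel identity $\|G_\psi f\|_{H^2(G\times\widehat G)}^2=\|f\|_2^2\,\|\psi\|_2^2=m(H)^2$ combined with the above yields
$$m(H)^2 \;=\; m(H)\int_{\widehat G}\|T_\pi\|_{\mathrm{HS}}^2\,d\pi \;\geq\; m(H)\cdot m(H)^2\,\mu(\widehat G_H),$$
where $\widehat G_H:=\{\pi:T_\pi\neq 0\}$; thus $\mu(\widehat G_H)\leq 1/m(H)<\infty$. Hence $\{(x,\pi):G_\psi f(x,\pi)\neq 0\}\subseteq H\times\widehat G_H$ has finite $m\times\mu$ measure while $f=\chi_H\not\equiv 0$, contradicting QUP. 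The only nontrivial step is the identification $T_\pi=m(H)P_\pi^H$; everything else is bookkeeping with the Plancherel formula. Neither non-compactness nor non-discreteness of $G$ is actually used, so those hypotheses in the statement presumably serve only to exclude situations already handled elsewhere.
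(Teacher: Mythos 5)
Your proof is correct, and it differs from the paper's in the key step. The paper makes the same choice $f=\psi=\chi_H$ and the same observation that the $x$-support is $H$, but on the dual side it argues that $G_\psi f(x,\pi)=0$ for every $\pi$ that is nontrivial on $H$: from $G_\psi f(x,\pi)(I-\pi(a)^*)=0$ for $a\in H$ it forms the closed span $V$ of $\bigcup_{a\in H}(I-\pi(a)^*)\mathcal{H}_\pi$, claims $V$ is $\pi(G)$-invariant via $\pi(g)(I-\pi(a)^*)\xi=(I-\pi(gag^{-1})^*)\pi(g)\xi$, concludes $V=\mathcal{H}_\pi$ by irreducibility, so the support is exactly $H\times A(\widehat G,H)$, and then cites Hogan's Theorem~2.4 for $\mu(A(\widehat G,H))<\infty$. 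Note that this invariance step tacitly needs $gag^{-1}\in H$, i.e.\ normality of $H$; without it the correct conclusion is precisely yours, namely $G_\psi f(x,\pi)=m(H)\chi_H(x)P_\pi^H$, and the support is $H\times\{\pi:\mathcal{H}_\pi^H\neq0\}$, which can be strictly larger than $H\times A(\widehat G,H)$ (spherical representations relative to a non-normal compact open subgroup). Your route replaces the citation to Hogan by the quantitative bound $\mu\{\pi:\mathcal{H}_\pi^H\neq0\}\leq 1/m(H)$, extracted directly from the orthogonality relation $\|G_\psi f\|_{H^2(G\times\widehat G)}=\|f\|_2\|\psi\|_2$ together with $\|T_\pi\|_{\mathrm{HS}}^2=m(H)^2\dim\mathcal{H}_\pi^H\geq m(H)^2$ on that set. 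What this buys is a self-contained argument, free of any normality issue and with an explicit constant; what the paper's route buys (when it applies, e.g.\ $H$ normal) is the exact identification of the support set. Your closing remark is also consistent with the paper: noncompactness and nondiscreteness are not really used there either, beyond the observation that $H$ is nontrivial; they serve to isolate the case not already covered.
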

 
 \begin{proof}
Let $\alpha = m_G(H), \text{ then } 0 <\alpha <\infty \text{ and } m_H = \alpha^{-1}(m_G|_H) $ is a Haar measure for $H$. Also $H$ is non-discrete. So, $H$ is non-trivial and $A(\widehat{G},H)\subset\widehat{ G}.$
 Define $ f =  \chi_H \text{ and } \psi = \chi_H $.
Then $f$ and \ $\psi $ are  non-zero function of $L^2(G).$   
If $ \pi \in A(G,\widehat{H})$ then $ G_\psi f(x,\pi) = \chi_H(x)\alpha I.$ 
    
\noindent Let $ \pi \notin A(\widehat{G},H),  a \in H \text{ and }  \zeta,\eta \in \mathcal{H}_{\pi}$ it follows, 
  \begin{align*}
 \langle G_\psi f(x,\pi)\zeta, \eta \rangle =  \langle G_\psi f(x,\pi)\pi(a)^*\zeta, \eta \rangle
  \end{align*}                
    which means, $  G_\psi f(x,\pi)(I - \pi(a)^*)\zeta = 0  \text{for all  } \zeta \in  \mathcal{H}_\pi,a \in H.$ Therefore $G_\psi f(x,\pi)\xi = 0  \text{ for all }   \xi \in  V$ where $V$ is the smallest closed set containing $ \bigcup\limits_{a\in H}(I-\pi(a)^*)(\mathcal{H}_{\pi}), a\in H. $  
         
 \noindent Now for all $ g \in G,a \in H, \xi \in \mathcal{ H}_\pi$, We have
  \begin{align*}
  \pi(g)(I - \pi(a)^*)\xi   &= (I - \pi(ga^{-1}g^{-1})^*)\pi(g)\xi
  \end{align*}
      which implies that  $V$ is a closed invariant subspace, as $\pi$ is  irreducible so  $V = \mathcal{H}_\pi.$Thus it follows
     \[
   {\{(x,\pi):G_\psi f(x,\pi)\neq 0 \}} = \chi_{H}\times A(\widehat{G},H).\]
  Consequently, using (\cite{hogan}, Theorem 2.4)we have 
      \[ m\times m_{\widehat{G}} \{(x,\pi): G_\psi f(x.\pi) \neq 0 \} = m(H)m_{\widehat{G}}(A(\widehat{G},H)) < \infty.   \]
 \end{proof}
 
 \begin{rem}
   A locally compact group is said to be Plancherel if dual object $\widehat{G}$ can be equipped with a measure $ \mu_{G} \ ($the Plancherel measure$)$ such that
            \[ \int_G |f(x)|^2dm_G(x) = \int _{\widehat{G}}\| \widehat{f}(\pi)^2\|d\mu_{G}
(\pi). \] Theorem \ref{th2} is true for noncompact, nondiscrete Plancherel group. Maurtner Groups {\cite{folland}} are example of Plancherel group, unimodular  not type I.
\end{rem}
 \begin{rem}
Clearly,  QUP   for Gabor transform  does not hold for groups of the type $ D \times  K \text{ where } D $ is discrete Maurtner group, $ K $ is compact group and Moore group with compact component of identity.
 \end{rem}

\section{ Nilpotent Group }
 For a locally compact unimodular group  G of type I having  center $Z$,  suppose that there exist a zero set $E$ in $\widehat{Z}$ such that for every $\lambda \in \widehat{Z}\setminus E,$ the induced representation ind$_{Z}^{G} \lambda $ is a multiple of an irreducible $\pi_{\lambda}$. Then according  the Plancherel formula 
(\cite{klep}, Theorem 8.1), the Plancherel measure on $\widehat{G}$ is given by 
 \[\mu_{G}(W)= \int\limits_{\widehat{Z}\setminus E}\chi_{W}(\pi_{\lambda})q(\lambda)d\lambda,
 \]
 where $ W \subseteq  \widehat{G} \text{ and } q$ is a positive measurable function.
  \begin{prop}\label{prop1}
 In the above situation, suppose that $Z$ has the following property: 
 
 If $f \in L^{2}(Z)\text{ and } \psi$ is a window function in $L^2(Z)$ satisfies
 
\[\int\limits_{Z } \int\limits_{\widehat{Z}\setminus E }\chi_{\{(x, \lambda ): G_{\psi}f(x,\lambda)\neq 0 \}}(x,\lambda)q(\lambda) d\lambda dx < \infty, \] implies $f = 0$ a.e. Then QUP for Gabor transform hold for $G$.
  \end{prop}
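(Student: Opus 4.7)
The plan is to apply the hypothesised weighted QUP on $Z$ after localising the Gabor transform of $f$ on $G$ to a Gabor transform on $Z$, using restriction to a single coset of $Z$. The bridge is the parametrisation $\lambda\mapsto\pi_\lambda$ of $\widehat G$ by $\widehat Z\setminus E$ modulo null sets, together with a Plancherel-type dictionary relating $\widehat g(\pi_\lambda)$ to the partial Fourier transform of $g$ along $Z$.

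First I establish the dictionary. For $g\in L^1(G)\cap L^2(G)$, set
\[
g^{\bar\lambda}(y')=\int_Z g(zy')\,\overline{\lambda(z)}\,dz,\qquad y'\in G/Z.
\]
Combining the Weil integration formula with the relation $\pi_\lambda(zy')^*=\overline{\lambda(z)}\pi_\lambda(y')^*$ yields
\[
\widehat g(\pi_\lambda)=\int_{G/Z} g^{\bar\lambda}(y')\,\pi_\lambda(y')^*\,d\dot y'.
\]
Equating the two expressions for $\|g\|_2^2$ obtained from the Plancherel formula on $G$ (supported on $\widehat Z\setminus E$ with density $q(\lambda)$) and from applying the Plancherel formula on $Z$ fibrewise together with Fubini, and polarising, gives for a.e.\ $\lambda\in\widehat Z\setminus E$ the pointwise identity $\|\widehat g(\pi_\lambda)\|_{\mathrm{HS}}^{\,2}\,q(\lambda)=\|g^{\bar\lambda}\|_{L^2(G/Z)}^{\,2}$. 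Hence $\widehat g(\pi_\lambda)=0$ if and only if $g^{\bar\lambda}\equiv 0$ on $G/Z$.

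Next I localise at a coset. Since $\psi\neq 0$ in $L^2(G)$, Fubini provides $y_0\in G$ such that $\Psi(w):=\psi(wy_0)$ is a nonzero element of $L^2(Z)$. For each $y'\in G$ put $f_{y'}(u)=f(y'u)$ and $F_{y'}(w)=f(wy'y_0)$. The substitution $u\mapsto y'u$ in the definition of the Gabor transform gives the translation identity
\[
G_\psi f(zy',\pi_\lambda)=G_\psi f_{y'}(z,\pi_\lambda)\,\pi_\lambda(y')^*,
\]
so the vanishing loci of the two sides coincide. A direct computation using centrality of $Z$ produces
\[
\bigl((f_{y'})_z^{\psi}\bigr)^{\bar\lambda}(y_0)=\int_Z f(\tau y'y_0)\,\overline{\psi(z^{-1}\tau y_0)}\,\overline{\lambda(\tau)}\,d\tau=G_\Psi F_{y'}(z,\lambda).
\]
Combined with the dictionary of the first step, this gives the inclusion
\[
\{(z,\lambda)\in Z\times(\widehat Z\setminus E):G_\Psi F_{y'}(z,\lambda)\neq 0\}\subseteq\{(z,\lambda):G_\psi f(zy',\pi_\lambda)\neq 0\}.
\]

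To conclude, the Weil formula $dx=dz\,d\dot y'$ converts the Gabor QUP hypothesis on $G$ into
\[
\int_{G/Z}\int_Z\int_{\widehat Z\setminus E}\chi_{\{G_\psi f(zy',\pi_\lambda)\neq 0\}}\,q(\lambda)\,d\lambda\,dz\,d\dot y'<\infty,
\]
so by Fubini the inner double integral is finite for a.e.\ $y'\in G/Z$. The inclusion above then shows that $F_{y'}\in L^2(Z)$ and the fixed window $\Psi$ satisfy the weighted Gabor condition assumed on $Z$; the hypothesis forces $F_{y'}=0$ a.e.\ on $Z$ for a.e.\ $y'\in G/Z$, that is, $f(wy'y_0)=0$ for a.e.\ $(w,y')\in Z\times(G/Z)$. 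A final application of the Weil formula (substitution $u=wy'y_0$) yields $f=0$ a.e.\ on $G$. The principal delicacy is the pointwise Plancherel identity of the first step; it is transparent in the nilpotent setting where $G/Z$ inherits a group structure, and in the general framework of the hypothesis should follow via Mackey-type arguments using that $\mathrm{ind}_Z^G\lambda$ is a multiple of the irreducible $\pi_\lambda$.
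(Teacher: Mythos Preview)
Your proof is correct and follows essentially the same route as the paper: both reduce to the weighted QUP on $Z$ via Weil's formula and the Kleppner--Lipsman parametrisation $\lambda\mapsto\pi_\lambda$, the crux in each case being the implication $\widehat g(\pi_\lambda)=0\Rightarrow g^{\bar\lambda}\equiv 0$ on $G/Z$ (which the paper invokes as ``using the Plancherel formula'' and which you correctly identify with square-integrability modulo the centre). Your organisation is marginally cleaner---you fix the window $\Psi=\psi(\cdot\,y_0)$ once and let only the function $F_{y'}$ vary with the coset, whereas the paper lets the window $({}_x\phi)_y|_Z$ depend on $y$ and must then argue that a suitable $x\notin K\cup N_y$ exists---but the two arguments are otherwise parallel.
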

  
  \begin{proof}
  Let $g \in L^2(G) \text{ and } \phi$ be a window function such that, 
  \[m\times \mu\{(x,\pi):G_{\phi}g(x,\pi)\neq 0\} < \infty.
  \]
  By Weil's formula, it follows
  \[ \int\limits_{G/Z}\int\limits_{ Z }\int\limits_{ \widehat{G}} \chi_{\{(hx,\pi): G_{\phi}g(x,\pi)\neq 0 \}} (hx,\pi)d\pi dh d\dot{x} < \infty.\]
  Now there exist a zero set $K$ in $G$ such that for all $ x\in G\setminus K,$ 
  \[\iint\limits_{Z \widehat{G}} \chi_{\{(hx,\pi): G_{\phi}g(x,\pi)\neq 0 \}} (hx,\pi)d\pi dh  < \infty.\]
  Also there exist a  zero set $M$ such that for all $y\in G\setminus M , \text{ we have } (\phi_y|_Z)\text{ and } (g_y|_Z) \in L^2(Z)$ where $(g_y|_Z)(h) = g(hy), \text{for all }  h \in Z.$ Fix $ y \in G\setminus M $ and define $ k(x) = \phi(x^{-1}y),$ for all $x \in $ G. Then $k \in L^2(G).$ 
  Again by  Weil's formula, there exist a  zero set $N_y$ such that for all $x \text{ in } G\setminus N_y$,
  \[\int_{Z}| k(xv)|^2 dv =  \int_{Z} | \phi(x^{-1}vy)|^2 dv < \infty. \]
Since $ \phi \neq 0 $, Therefore  we can choose $x \in G\setminus (K\cup N_y)$ such that $0 \neq ({ }_{x}\phi)_y|_Z \in L^2(Z),\text{ where } ({ }_x\phi)_y|_Z(v)= \phi(x^{-1}vy).$
Fix  such $x \in G\setminus (K\cup N_y)$ and for $h \in Z ,\text{ define } g_{hx}^{\phi}(z) = g(z)\overline{\phi(x^{-1}h^{-1}z)}.$ Also $(g_{hx}^{\phi})_y|_Z = (g_y|_Z)_{h}^{({ }_x\phi)_y|_Z}. $ 

Now using the Plancherel formula we have,
\[ \int_{\widehat{Z}\setminus E}\chi_{\{\lambda :G_{({ }_x\phi)_y|_Z)}(g_y|_Z)(h,\lambda) \neq 0\}}(\lambda)q(\lambda) d\lambda \leq \int_{\widehat{G}} \chi_{\{\pi: G_{\phi}g(hx,\pi) \neq 0\}}(\pi)d\pi.
\] 
 On integration it follows,
\[\iint\limits_{ Z \widehat{Z}\setminus E}\chi_{\{(h,\lambda) :G_{({ }_x\phi)_y|_Z)}(g_y|_Z)(h,\lambda) \neq 0\}}(h,\lambda)q(\lambda) d\lambda dh \leq \iint\limits_{Z \widehat{G}} \chi_{\{(h,\pi): G_{\phi}g(hx,\pi) \neq 0\}}(h,\pi)d\pi dh < \infty .
\] which implies that $g_{y}|_Z = 0 $ a.e. for all $y \in G\setminus M$. Since $y$ was arbitrary fixed,
Therefore,  $ g = 0 $ a.e.
 \end{proof}

For $ f \in L^1(G), $ let us define 
\[ A_f = \{ x \in G : f(x) \neq 0 \} \text { and } B_{f} = \{ \pi \in \widehat{G} : \widehat{f}(\pi) \neq 0 \}.\]
\noindent  The idea of the proof of following lemma emerges from \cite{ajay} and \cite{bened}.

\begin{lem}\label{lem1}
 Let $ f \in L^1(\mathbb{R}^n), n\geq 2  \text{ be such that for some  } \delta \in \mathbb{R}^n, $
\[\int\limits_{\mathbb{R}^n} \chi_{A_f}(x)|p(x-\delta)|dx < \infty \text{ and } \int\limits_{\mathbb{R}^n} \chi_{B_f}(\xi)|p(\xi-\delta)|d\xi <  \infty,  
\]
where $ p(x) = p(x_1, \dots x_n) = x_1, {x_1}^2, x_1 x_2, {x_1}^2+ {x_2}^2.$ Then $f = 0 $ a.e.
\end{lem}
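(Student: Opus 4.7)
My first step is a reduction to $\delta = 0$. Replacing $f$ by $g(x) = e^{-i\delta\cdot x} f(x + \delta)$ gives $A_g = A_f - \delta$ and $B_g = B_f - \delta$, and a change of variables in each integral shows the two hypotheses become $\int_{A_g}|p(y)|\,dy < \infty$ and $\int_{B_g}|p(\eta)|\,d\eta < \infty$. I therefore assume $\delta = 0$ below, so that the conditions read $\int_{A_f}|p(x)|\,dx < \infty$ and $\int_{B_f}|p(\xi)|\,d\xi < \infty$.

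The overall strategy is to reduce each of the four polynomials to Benedicks' theorem in a lower-dimensional slice. For $p(x) = x_1$ and $p(x) = x_1^2$ I isolate the $x_1$-direction by a partial Fourier transform in $x' = (x_2, \dots, x_n)$: set $\tilde f(x_1, \xi') = \int f(x_1, x')\,e^{-i\xi'\cdot x'}\,dx'$. For each fixed $\xi'$, the one-dimensional Fourier transform in $x_1$ of $\tilde f(\cdot, \xi')$ equals $\hat f(\xi_1, \xi')$. Fubini applied to the hypothesis on $B_f$ shows that for a.e.\ $\xi'$ the set $\{\xi_1 : \hat f(\xi_1, \xi')\ne 0\}$ has finite first (resp.\ second) moment in $\xi_1$, and similarly for the $x_1$-slices of $A_f$. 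This reduces matters to a one-dimensional weighted Benedicks-type statement of the form: if $h\in L^1(\mathbb R)$ satisfies $\int_{A_h}|t|^k\,dt < \infty$ and $\int_{B_h}|\tau|^k\,d\tau < \infty$, then $h = 0$. I would obtain this by writing the supports as a bounded piece plus a tail of finite measure (which tends to zero measure as the truncation radius grows), feeding the truncations into Benedicks' periodization argument and passing to the limit. The hypothesis $n\ge 2$ is what guarantees at least one extra coordinate to perform the partial Fourier transform in.

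For the cross term $p(x) = x_1 x_2$ and the disk weight $p(x) = x_1^2 + x_2^2$ the polynomial involves two coordinates, so I would instead take the partial Fourier transform in $x'' = (x_3, \dots, x_n)$ and reduce to a weighted two-dimensional Benedicks statement, which must be established directly when $n = 2$. The radial weight $x_1^2 + x_2^2$ is the easiest of the four: after truncating the supports to $\{x_1^2 + x_2^2 \le R\}$ the truncations have finite planar measure, so classical Benedicks applies to each truncation, and a dominated-convergence limit $R\to\infty$ returns $f = 0$.

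The principal obstacle is that a polynomial weight does not by itself force $A_f$ or $B_f$ to have finite measure: the supports may extend to infinity along the zero set of $p$, which is a hyperplane for $p = x_1$, a pair of hyperplanes for $p = x_1 x_2$, and a codimension-two subspace for $p = x_1^2 + x_2^2$. I expect $p(x) = x_1 x_2$ to be the hardest case, because the zero locus is the union of the two coordinate hyperplanes, so a single partial Fourier transform does not simultaneously tame the two bad directions; the argument there will have to combine partial transforms in both $x_1$ and $x_2$ in two stages and carefully track the cross-term in the resulting slicewise first-moment conditions. Once that 2D weighted Benedicks is in hand, the higher-dimensional conclusion follows by the same Fubini-plus-partial-FT mechanism used in the other cases.
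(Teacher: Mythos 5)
Your reduction has a genuine gap at its central step: the partial Fourier transform cannot deliver both hypotheses for a single lower-dimensional function. Fix $\xi'$ and set $h(x_1)=\tilde f(x_1,\xi')$. Its one-dimensional Fourier transform is indeed the slice $\hat f(\cdot,\xi')$, so by Fubini the frequency hypothesis passes to $h$ for a.e.\ $\xi'$. But the support of $h$ in $x_1$ is only contained in the \emph{projection} of $A_f$ onto the $x_1$-axis, not in any slice of $A_f$, and the hypothesis $\int\chi_{A_f}(x)\,|p(x)|\,dx<\infty$ gives no control on that projection: already in $\mathbb{R}^2$ the strip $\{(x_1,x_2):|x_2-x_1|<e^{-|x_1|^3}\}$ has finite $|x_1|$- (and $x_1^2$-) weighted measure while its projection is all of $\mathbb{R}$. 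Conversely, the $x'$-slices of $A_f$ control $f(\cdot,x')$, whose one-dimensional Fourier transform is not a slice of $\hat f$. So Fubini never hands you one function with both a spatial and a frequency support condition; this mismatch is exactly why higher-dimensional Benedicks is not a routine Fubini reduction to $n=1$, and it equally undermines the two-stage scheme you sketch for $x_1x_2$. The truncation argument for $x_1^2+x_2^2$ fails for a different reason: multiplying $f$ by $\chi_{\{x_1^2+x_2^2\le R\}}$ convolves $\hat f$ with a kernel supported on all of $\mathbb{R}^n$, so the truncated function satisfies no Fourier-support hypothesis and classical Benedicks does not apply to it. (When $n=2$ that case needs no truncation: the weight vanishes only at a point, so $A_f$ and $B_f$ automatically have finite measure; the whole difficulty is $n\ge 3$, where the supports may be unbounded along $\{x_1=x_2=0\}\times\mathbb{R}^{n-2}$, and your limit in $R$ does not address it.)

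The paper avoids slicing and truncation altogether and runs Benedicks' periodization directly in $\mathbb{R}^n$, building the weight into a measure on the torus. After a dilation making $\int\chi_{A_f}(x)|p(x-\delta)|\,dx$ smaller than the $p_\delta$-weighted volume of $\mathbb{T}^n$, where $p_\delta(y)$ is an a.e.\ positive function bounding $|p(y-\delta+m)|$ from below for all $m\in\mathbb{Z}^n$, the frequency hypothesis shows that for a.e.\ $\xi$ only finitely many translates $\xi+m$ lie in $B_f$, so the modulated periodization $\tilde f_\xi(x)=\sum_{m}e^{-\iota\langle\xi,x-m\rangle}f(x-m)$ is a trigonometric polynomial on $\mathbb{T}^n$ with coefficients $\hat f(\xi+k)$; the dilated spatial hypothesis forces $\tilde f_\xi$ to vanish on a set of positive measure, hence identically, so $\hat f(\xi+k)=0$ for all $k$ and a.e.\ $\xi$, i.e.\ $f=0$. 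The periodization is what tames the noncompact zero set of $p$ that you correctly identified as the obstacle: on the quotient torus the infimum of the weight over lattice translates is positive almost everywhere. To salvage your outline you would need to replace the slicing step by such a periodization (your $\delta$-normalization and the observation that the one-dimensional weighted statement follows from classical Benedicks are fine, but they are not where the difficulty lies).
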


\begin{proof}
  Replacing $f $ by a suitable dilate we can assume that
    $ $

   \[ \int\limits_{\mathbb{R}^n} \chi_{A_{f}}(x)| p(x - \delta)| dx < \tilde{\mu}_{\mathbb{T}^2} \times m_{\mathbb{T}^{n-2}}(\mathbb{T}^n)\] 
   where $\mathbb{T}^{n}  = \mathbb{R}^{n}/\mathbb{Z}^{n} \text{ and } d\tilde{\mu}_{\mathbb{T}^2}(y) = p_{\delta}(y)dm_{\mathbb{T}^2}(y), $

\noindent $p_{\delta}(y) =\text{min}\{y_1, (1-y_1),(y_1 + \lceil {\delta_1}\rceil-\delta_1)\},  \text{ when }p(x) = x_1,\\
p_{\delta}(y) =\text{min} \{y_1^2, (1-y_1)^2,(y_1 + \lceil {\delta_1}\rceil-\delta_1)^2\}, \text{ when }  p(x) = {x_1}^2, $
\begin{align*}
p_{\delta}(y) =\text{min} \{ &y_1y_2, (1-y_1)y_2,y_1(1-y_2),(1-y_1)(1-y_2),y_1|y_2 + \lceil {\delta_2}\rceil                      -\delta_2|,\\
& (1-y_1)|y_2 + \lceil {\delta_2}\rceil -\delta_2|,y_1 + \lceil {\delta_1}\rceil -\delta_1|y_2,|y_1 + \lceil {\delta_1}\rceil -\delta_1||y_2 + \lceil {\delta_2}\rceil -\delta_2|,\\
&|y_1 + \lceil {\delta_1}\rceil -\delta_1|(1-y_2)\},  \text{ when } p(x) = x_1x_2,
\end{align*}
\begin{align*}
p_{\delta}(y) =\text{min}\{&{y_1}^2+{y_2}^2, (1-y_1)^2 + {y_2}^2,{y_1}^2+(1-y_2)^2,(1-y_1)^2+(1-y_2)^2,\\
&{y_1}^2+(y_2 + \lceil {\delta_2}\rceil -\delta_2)^2, (1-y_1)^2+(y_2 + \lceil {\delta_2}\rceil -\delta_2)^2,(y_1 + \lceil {\delta_1}\rceil -\delta_1)^2{y_2}^2,\\
&(y_1 + \lceil {\delta_1}\rceil -\delta_1)^2+(y_2 + \lceil {\delta_2}\rceil -\delta_2)^2,(y_1 + \lceil {\delta_1}\rceil -y_1)^2+(1-y_2)^2\}, \\
& \text{ when } p(x) = {x_1}^2+{x_2}^2
\end{align*} 
 $\lceil . \rceil $  being the greatest integer function. 
    Now define $\phi: \mathbb{R}^n \rightarrow \mathbb{R} \cup \{ \infty \}$ by
   
$$  \phi(\xi) = \sum\limits_{m \in \mathbb{Z}^n}\chi_{B_f}(\xi + m)|p( \xi+ m)|,$$
 and let $ K = \{\xi \in \mathbb{R}^n : 0 < \xi_i < 1, i = 1,2 \dots n\}.$
 
 \noindent Then $\int\limits_{K}\phi (\xi)d\xi = \int\limits_{\mathbb{R}^n}\chi_{B_{f}}(\xi)|p( \xi)| d\xi <\infty.$ Therefore, $\phi(\xi) < \infty $ a.e.
   
   \noindent Now for almost all   $ \xi  = (\xi_1, \dots \xi_n) \in K$ and
    $m \in \mathbb{Z}^n$,
   \[ |p( \xi - \delta + m)| \geq  p_{\delta}(\xi_1, \xi_2) > 0 \]
and   $  p_{\delta}(\xi_1, \xi_2) \sum\limits_{ m \in \mathbb{Z}^n}\chi_{B_{f}}(\xi+m) \leq \phi(\xi) < \infty.$

\noindent Consequently, for almost all $\xi \in \mathbb{R}^n, \chi_{B_{f}}(\xi + m)\neq 0 $ only for finitely many $m \in \mathbb{Z}^n.$  
Fix $\xi \in \mathbb{R}^n$ and define $\tilde{f}_{\xi}(x) = \sum\limits_{m \in Z^n} e^{-\iota \langle \xi, x - m\rangle}f(x-m).$ Then $\tilde{f}_{\xi} \in L^1(\mathbb{T}^n). $ Also,
$\widehat{(\tilde{f}_{\xi})}(k) = \widehat{f}(\xi+k).$

 \noindent Now, 
\begin{align*}
 \tilde{\mu}_{\mathbb{T}^2}\times m_{\mathbb{T}^{n-2}}(\mathbb{T}^n) &> \int\limits_{\mathbb{R}^n}\chi_{A_f}(x)| p(x - \delta) | dx\\
                 &= \int\limits_{\mathbb{T}^n}\sum\limits_{m \in \mathbb{Z}^n}\chi_{A_f}(x + m)|p( x - \delta + m)| d\dot{x}\\
                 &\geq \int\limits_{\mathbb{T}^n}\chi_{A_{\tilde{f}_{\xi}}}(\dot{x_1},\dots ,\dot{x_n})| p_{\delta}(\dot{x}_1,\dot{x}_2)| d\dot{x_1}\dots  d\dot{x_n}\\
                  &= \tilde{\mu}_{\mathbb{T}^2}\times m_{\mathbb{T}^{n-2}} (A_{f_{\xi}})
\end{align*}   
        Thus  $ \tilde{\mu}_{\mathbb{T}^2}\times m_{\mathbb{T}^{n-2}}\{x : \tilde{f_{\xi}}(x) = 0 \}>  0  $  which implies that $m_{\mathbb{T}^n}\{x : \tilde{f_{\xi}} = 0 \}> 0.$ But for almost all $\xi \in \mathbb{R}, \tilde{f}_{\xi}$ is a trigonometric polynomial. Therefore  $\widehat{f} = 0 $ a.e. So, by  Fourier inversion theorem,  $f = 0 $ a.e.
 \end{proof}

\begin{lem}\label{lem2}
Let $f \in L^2(\mathbb{R}^n )\text{ and } \psi $ be a window function such that 
\[\int\limits_{\mathbb{R}^{2n}}\chi_{\{(x,\xi): G_{\psi}f(x,\xi)\neq 0\}}(x, \xi)| p(\xi) | dxd\xi < \infty ,\] where $p(\xi) = p(\xi_1, \dots \xi_n ) = \xi_1,\xi^2, \xi_1\xi_2, {\xi_1}^2 + {\xi_2}^2$,
 then $f = 0$ a.e.
 \end{lem}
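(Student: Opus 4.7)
The plan is to reduce the assertion to Lemma \ref{lem1}, applied slice-by-slice to $f_x^{\psi}(y)=f(y)\overline{\psi(y-x)}$ for a.e.\ $x\in\mathbb{R}^n$, via the identity $G_{\psi}f(x,\xi)=\widehat{f_x^{\psi}}(\xi)$ established in the introduction.

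First I would note that $f,\psi\in L^{2}(\mathbb{R}^n)$ forces $f_x^{\psi}\in L^{1}\cap L^{2}$ for a.e.\ $x$ by Cauchy--Schwarz. Applying Fubini to the hypothesis
\[
\int_{\mathbb{R}^{2n}}\chi_{\{G_{\psi}f\neq 0\}}(x,\xi)\,|p(\xi)|\,dx\,d\xi<\infty
\]
produces a Lebesgue null set $K\subset\mathbb{R}^n$ such that, for every $x\notin K$,
\[
\int_{\mathbb{R}^n}\chi_{B_{f_x^{\psi}}}(\xi)\,|p(\xi)|\,d\xi<\infty,
\]
which is precisely the Fourier-side integrability in Lemma \ref{lem1} taken with $\delta=0$. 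For the companion position-side condition, I would exploit the inclusion $A_{f_x^{\psi}}\subseteq x+A_{\psi}$: the change of variable $z=y-x$ together with the choice $\delta=x$ gives
\[
\int_{\mathbb{R}^n}\chi_{A_{f_x^{\psi}}}(y)\,|p(y-x)|\,dy\leq\int_{A_{\psi}}|p(z)|\,dz.
\]
Granted that this bound is finite, Lemma \ref{lem1} forces $f_x^{\psi}=0$ a.e.\ for a.e.\ $x\notin K$; then $f(y)\overline{\psi(y-x)}=0$ a.e.\ in $(x,y)$, and since $\psi\not\equiv 0$, Fubini in $x$ yields $f=0$ a.e.

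The main obstacle is precisely the finiteness of $\int_{A_{\psi}}|p(z)|\,dz$, which does not follow from $\psi\in L^{2}$ alone. This forces one either to restrict to windows with adequate support-decay (e.g., compactly supported $\psi$, which is enough for the applications to Heisenberg-type groups via Proposition \ref{prop1}), or to perform a preliminary reduction---for instance, replacing $\psi$ by a truncation $\chi_{E}\psi$ and verifying that the weighted Gabor-support hypothesis on $f$ is preserved under this replacement, or exploiting the symmetry $|G_{\psi}f(x,\xi)|=|G_{\hat\psi}\hat f(\xi,x)|$ to shift the integrability burden from $A_{\psi}$ onto $A_{\hat\psi}$ and then combining both viewpoints.
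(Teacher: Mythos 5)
Your reduction to Lemma \ref{lem1} applied to the slices $f_x^{\psi}$ breaks down exactly where you flag it, and the remedies you sketch do not repair it while keeping the lemma as stated. The position-side hypothesis of Lemma \ref{lem1} for $f_x^{\psi}$ requires $\int_{A_{\psi}}|p(z)|\,dz<\infty$, which fails for a generic window $\psi\in L^{2}$; restricting to compactly supported (or polynomially localized) $\psi$ proves a strictly weaker statement than the lemma claims, and in fact the application through Proposition \ref{prop1} uses windows of the form $({}_x\phi)_y|_Z$ which are merely $L^2$ restrictions, so the weaker version would not suffice there. The truncation idea $\psi\mapsto\chi_E\psi$ does not work either, because the set $\{G_{\chi_E\psi}f\neq 0\}$ is not controlled by $\{G_{\psi}f\neq 0\}$ (cutting the window destroys the cancellations that produce zeros of $G_{\psi}f$), so the weighted support hypothesis is not preserved; and the symmetry $|G_{\psi}f(x,\xi)|=|G_{\hat\psi}\hat f(\xi,-x)|$ only trades $A_{\psi}$ for $A_{\hat\psi}$, neither of which need carry a finite $|p|$-weighted measure. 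So the proposal, as it stands, has a genuine gap: a slice-by-slice use of Lemma \ref{lem1} cannot produce the missing position-side integrability.

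The paper's proof supplies the missing idea by changing what Lemma \ref{lem1} is applied to: for each $(y,\delta)$ it forms the auxiliary function on $\mathbb{R}^{2n}$
\[
F_{(y,\delta)}(x,\xi)=e^{i\langle \xi,x\rangle}\,G_{\psi}(M_{\delta}T_{y}f)(x,\xi)\,G_{\psi}(M_{\delta}T_{y}f)(-x,-\xi),
\]
which is continuous, integrable, and essentially its own Fourier transform, $\widehat{F_{(y,\delta)}}(\xi,z)=F_{(y,\delta)}(-z,\xi)$. Its support sits inside the $(y,\delta)$-translate of $\{G_{\psi}f\neq 0\}$, so the single hypothesis $\int\chi_{\{G_{\psi}f\neq 0\}}(x,\xi)|p(\xi)|\,dx\,d\xi<\infty$ simultaneously yields both the $A$-side and the $B$-side weighted conditions of Lemma \ref{lem1} (in dimension $2n$, with shift $\delta$), with no hypothesis at all on the support of $\psi$. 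Lemma \ref{lem1} then gives $F_{(y,\delta)}\equiv 0$, and evaluating at the origin gives $\bigl(G_{\psi}f(-y,-\delta)\bigr)^{2}=0$ for every $(y,\delta)$, hence $G_{\psi}f\equiv 0$ and $f=0$ a.e.\ by $\|G_{\psi}f\|=\|f\|_2\|\psi\|_2$. If you want to salvage your approach, this self-reciprocal product construction is the step you need to add; the slice-by-slice argument alone cannot close the gap.
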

 \begin{proof}
 For each $(y,\delta) \in \mathbb{R}^n
 \times \mathbb{R}^n,$ define 
$$F_{(y,\delta)}(x,\xi)=  \xi(x)G_{\psi}(M_{\delta}T_yf)(x,\xi) \times G_{\psi}(M_{\delta}T_yf)(-x,-\xi),$$ where $ T_y, M_\delta $ are translation and modulation operator respectively. 
Then it is easy to prove that  $\widehat{F_{(y,\delta)}}(\xi,z)= F_{(y,\delta)}(-z,\xi).$

\noindent Moreover $F_{(y,\delta)}$ is continuous and 
\begin{align*}
\{(x,\xi): F_{(y,\delta)}(x,\xi)\neq 0 \} &\subseteq \{(x,\xi): G_{\psi}(M_{\delta}T_y f)(x,\xi)\neq 0 \} \\
&= \{(x,\xi): G_{\psi} f(x-y,\xi - \delta)\neq 0 \}.\\
\end{align*}
So it follows,
$\chi_{A_F}(x,\xi) \leq \chi_{\{(x,\xi): G_{\psi}(M_{\delta}T_y f)(x,\xi)\neq 0 \}}(x-y,\xi-\delta)$
and hence,
\begin{align*}
\int\limits_{\mathbb{R}^{2n}}\chi_{A_F}(x,\xi)|p( \xi - \delta )| dxd\xi
        & \leq \int\limits_{\mathbb{R}^{2n}}\chi_{\{(x,\xi): G_{\psi}f(x,\xi)\neq 0 \}}(x-y,\xi-\delta) | p(\xi - \delta) | dxd\xi\\
        &=  \int\limits_{\mathbb{R}^{2n}}\chi_{\{(x,\xi): G_{\psi}f(x,\xi)\neq 0 \}}(x,\xi) |p( \xi )  | dxd\xi < \infty.
\end{align*}
 Moreover, $\int\limits_{\mathbb{R}^{2n}}\chi_{A_F}(x,\xi)|p( \xi - \delta) | dxd\xi = \int\limits_{\mathbb{R}^{2n}}\chi_{B_F}(\xi,x)|p( \xi - \delta ) | dxd\xi< \infty.$\linebreak
  Now using lemma \ref{lem1} \ $F_{(y,\delta)} = 0 $ for every $(y,\delta) \in \mathbb{R}^2.$\\ 
  So, $F_{(y,\delta)}(0,0) =  (G_{\psi} f(-y,-\delta))^2 = 0$  for every $(y,\delta) \in \mathbb{R}^2$ which implies that $f = 0$ a.e.
 \end{proof}

For a simply connected nilpotent Lie group G with Lie algebra $\mathcal{G}, \widehat{G}$ can be  parametreized by the set of coadjoint orbits of $G$ in the vector space dual $\mathcal{G}^* \text{ of } \mathcal{G}$\cite{kiri}. Let $f \in \mathcal{G}^*$, and  $\pi_{f}$  denote the irreducible representation associated to $f$ and by $O_f$ the coadjoint orbit of $f$. Moreover, let Z and $\mathcal{Z}$ denote the center of $G \text{ and } \mathcal{G},$respectively. 
Applying  Proposition \ref{prop1} to $G$, we obtain QUP  for G if,

\noindent (i) For allmost all $f \in \mathcal{G}^*, \pi_{f}  \sim \text{ind}_{Z}^{G}(\pi_{f}|_Z). $

\noindent (ii) dim $\mathcal{Z} \leq 2, q(\lambda) \text{ of Proposition } \ref{prop1} \text{ is of the form  } q(\lambda) = |p(\lambda)|, $ where $p$ is homogeneous polynomial of degree $\leq 2.$

\noindent Now, from the data presented in \cite{niel} for low dimensional groups  and using lemma   lemma \ref{lem2}the following groups have QUP for Gabor transform:
\noindent $G_3, G_{5,1}, G_{5,3}, G_{5,6}, $ for $n = 1$ and 
 $G_{6,16},G_{6,17},G_{6,19},G_{6,20},G_{6,21},G_{6,22},G_{6,23},G_{6,24}$  for $ n =2$  have QUP for Gabor transform.
\section{Moore Group }
\noindent  A locally compact group $G$ is said to be Moore if dim $\pi < \infty $ for all $\pi \in \widehat{G}.$ This class contain finite extension of abelian groups and compact groups. Let $G_{F}$ denote  the subgroup consisting of all elements of G with relatively compact conjugacy classes. It is an open normal subgroup of finite index. 

 \noindent A locally compact $G$ has weak QUP for Gabor Transform if  $f \in L^2(G),\psi$  be a window function satisfying 
 \[ m\times \mu{\{ (x,\pi):G_{\psi}f(x,\pi)\neq 0\}< 1, \text{ then } f = 0 \text{ a.e.}} \]
 
 \begin{prop}\label{pro1}
  Let $G$ be a locally compact Plancherel Group and $K$ is a compact normal subgroup of $G$. If $G$ satisfies QUP $($the weak QUP $)$ of Gabor transform then $G/K$ also satisfy QUP $( \text{weak QUP})$ of Gabor transform.
 \end{prop}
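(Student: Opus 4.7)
The plan is to lift $f$ and $\psi$ from $L^{2}(G/K)$ to $L^{2}(G)$ via the quotient map $q\colon G\to G/K$, apply the hypothesis on $G$, and then descend. Normalize Haar measure on $K$ so that $m_{K}(K)=1$, and set $\tilde f(x):=f(q(x))$ and $\tilde\psi(x):=\psi(q(x))$. Weil's formula together with compactness of $K$ gives $\|\tilde f\|_{L^{2}(G)}=\|f\|_{L^{2}(G/K)}$ (and likewise for $\tilde\psi$), so the lifts lie in $L^{2}(G)$ and $\tilde\psi$ is non-zero whenever $\psi$ is. Since $K$ is normal, both lifts are bi-$K$-invariant; in particular $\tilde f(yk)=\tilde f(y)$ and $\tilde\psi(x^{-1}yk)=\tilde\psi(x^{-1}y)$ for every $k\in K$.

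I would then unfold $G_{\tilde\psi}\tilde f(x,\pi)=\int_{G}\tilde f(y)\overline{\tilde\psi(x^{-1}y)}\pi(y)^{*}\,dy$ via Weil's formula as an integral over $G/K\times K$. Using the above right-$K$-invariances and $\pi(yk)^{*}=\pi(k)^{*}\pi(y)^{*}$, the $K$-integration produces the operator $E_{\pi}:=\int_{K}\pi(k)^{*}\,dk$. Because $m_{K}(K)=1$, $E_{\pi}$ is the orthogonal projection onto the subspace of $K$-fixed vectors of $\mathcal H_{\pi}$; since $K$ is normal and $\pi$ is irreducible, a standard averaging argument (if $v$ is $K$-fixed then $\pi(g)v$ is also $K$-fixed for every $g\in G$, because $g^{-1}Kg=K$) forces $E_{\pi}=I$ when $\pi\in\widehat{G/K}\subseteq\widehat G$ and $E_{\pi}=0$ otherwise. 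Hence
\[
G_{\tilde\psi}\tilde f(x,\pi)=\begin{cases} G_{\psi}f(\dot x,\pi), & \pi\in\widehat{G/K},\\ 0, & \pi\notin\widehat{G/K}.\end{cases}
\]

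A parallel Plancherel computation on $K$-invariant $L^{2}$ functions yields $\mu_{G}\big|_{\widehat{G/K}}=\mu_{G/K}$ under the same normalization. Since $G_{\tilde\psi}\tilde f(x,\pi)$ is right-$K$-invariant in $x$, a second application of Weil's formula in the first variable produces the exact identity
\[
m_{G}\times\mu_{G}\bigl\{(x,\pi):G_{\tilde\psi}\tilde f(x,\pi)\neq 0\bigr\}=m_{G/K}\times\mu_{G/K}\bigl\{(\dot x,\pi):G_{\psi}f(\dot x,\pi)\neq 0\bigr\}.
\]
If the right-hand side is finite (respectively strictly less than $1$), so is the left, and QUP (respectively weak QUP) on $G$ forces $\tilde f=0$ a.e., whence $f=0$ a.e.\ on $G/K$. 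The main technical point is the clean dichotomy $E_{\pi}\in\{0,I\}$ together with the equality $\mu_{G}|_{\widehat{G/K}}=\mu_{G/K}$; both are standard consequences of $K$ being compact and normal, and crucially they produce an \emph{equality} (not just an inequality) of measures, which is precisely what allows the weak QUP threshold $1$ to descend from $G$ to $G/K$.
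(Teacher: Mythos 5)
Your proposal is correct and follows essentially the same route as the paper: lift $f$ and $\psi$ to $G$ through the quotient map, use Weil's formula and the averaging operator over $K$ to show the lifted Gabor transform agrees with $G_{\psi}f(\dot x,\pi)$ for $\pi\in\widehat{G/K}$ and vanishes otherwise, identify $\mu_{G}|_{\widehat{G/K}}$ with $\mu_{G/K}$, and conclude from the resulting equality of measures. You merely spell out more explicitly the dichotomy $E_{\pi}\in\{0,I\}$ (via normality of $K$ and irreducibility of $\pi$), which the paper leaves implicit in its case distinction over $A(\widehat G,K)$.
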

 
 \begin{proof}
 Let $ \dot{f} \in L^2(G/K)$\text{ and } $ \dot{\psi} $ be a window function 
  such that
  \[
m_{G/K}\times \mu_{G/K}\{(\dot{x}, \dot{\pi}):G_{\dot{\psi}} \dot{f}(\dot{x},\dot{\pi})\neq 0 \} < \infty (<1).
\]
Define $f(x)= \dot{f}(q(x))\text{  and }  \psi(x)=\dot{\psi}(q(x)),$  where $q$ is the quotient map.
 
\noindent  For $\xi, \eta \in \mathcal{H}_{\pi}$ consider,
 \begin{align*}
   \langle G_\psi f(x,\pi)\xi,\eta \rangle   &=\iint\limits_{G/K K}f(yk)\overline{\psi(x^{-1}yk)}\langle\pi(k)^*\pi(y)^*\xi,\eta \rangle dk d\dot{y} \\
&=\int_{G/K} \dot{f}(q(y))\overline{\dot{\psi}(q(x^{-1}y))} \left(\int_K \langle \pi(k)^*\pi(y)^*\xi,\eta \rangle dk\right) d\dot{y}
 \end{align*}
 So, it follows that,
 \[
 \langle G_\psi f(x,\pi) \xi,\eta \rangle = 
 \begin{cases}
 \langle G_{\dot{\psi}}\dot{f}(\dot{x},\pi)\xi,\eta\rangle, & if\  \pi \in A(\widehat{G},K) \\
 0, & \text{ elsewhere}
 \end{cases}
 \]
 which means that  $G_\psi f(x,\pi)$ is either zero a.e. on cosets of $K$ or non zero a.e.  Now consider,
\begin{align*}
 &m_{G/K}\times \mu_{G/K} { \{ (\dot{x},\pi): G_{\dot{\psi}} \dot{f}(\dot{x},\pi) \neq 0 \}}\\
                 & = \iint\limits_{\widehat{G/K} G/K}\chi_{\{(\dot{x},\pi):G_{\dot{\psi}}\dot{f}(\dot{x},\pi)\neq 0\}}(\dot{x},\pi)d\dot{x}d\pi\\
                 & = \iiint\limits_{\widehat{G/K} G/K K}\chi_{\{(xk,\pi):G_{\dot{\psi}}\dot{f}(\dot{x},\pi)\neq 0\}}(xk,\pi)dkd\dot{x}d\pi\\
                 & = \iint\limits_{\widehat{G/K} G}\chi_{\{(x,\pi):G_{\psi}f(x,\pi)\neq 0\}}(x,\pi)dxd\pi\\
                & = \iint\limits_{\widehat{G} G}\chi_{\{(x,\pi):G_{\psi}f(x,\pi)\neq 0\}}(x,\pi)dxd\pi\\
                &= m_G\times m_{\widehat{G}}{\{(x,\pi):G_{\psi}f(x,\pi)\neq 0 \}}.
                \end{align*}
 So $ f = 0 $ a.e.  Hence $ \dot{f} = 0 $ a.e.    
 \end{proof}

 \begin{rem}
  A discrete  Moore group $G$, which satisfy weak QUP for Gabor transform,  is abelian. By  considering  $ f = \chi_{\{e\}}, \psi = \chi_{\{e\}},$ we see that 
  \[ \{(x,\pi):G_\psi f(x,\pi) \neq 0 \} = \{ e \} \times \widehat{G}.\]  So we have $ m\times \mu \{(x,\pi):G_\psi f(x,\pi) \neq 0 \} = \mu(\widehat{G})   \text{ which mean }\mu(\widehat{G})\geq 1\text{ as } f \neq 0$, $ \psi \neq 0. $
  Also $\mu(\widehat{G})\leq [G:G_F]^{-1}.$  Therefore $ \mu(\widehat{G}) = 1$ and $[G:G_{F}] = 1.$ Now as proved in $($\cite{kaniuth}, Lemma 3.2$)$, $G$ is abelian.

\end{rem}

\begin{prop}
  Let $G$ be a Moore group with compact identity component $ G_0$. If $G$ satisfy weak QUP for Gabor transform  then $G/G_0 $ is abelian.
 \end{prop}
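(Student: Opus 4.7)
The plan is to use Proposition \ref{pro1} twice, reducing to the discrete Moore setting already handled in the remark preceding the statement. The starting observation is that $G_0$ is always a closed normal subgroup of $G$ and is compact by hypothesis; since every Moore group (being type I) is Plancherel, Proposition \ref{pro1} applies and yields that $G/G_0$ inherits the weak QUP for Gabor transform. The quotient $G/G_0$ is again Moore, because irreducible representations of $G/G_0$ pull back to finite dimensional irreducible representations of $G$, and it is totally disconnected by construction.

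The second step is to show that $G/G_0$ admits a neighbourhood base at the identity consisting of compact open \emph{normal} subgroups. For this I would use that every Moore group is SIN (small invariant neighbourhoods): the FC-subgroup $(G/G_0)_F$ is open of finite index in $G/G_0$, and inside this totally disconnected FC group van Dantzig's theorem produces arbitrarily small compact open subgroups $U$. Because conjugacy classes in $(G/G_0)_F$ are relatively compact and $(G/G_0)_F$ has finite index in $G/G_0$, the normal closure of $U$ in $G/G_0$ is again compact, still open, and now normal. Letting $U$ shrink through a neighbourhood base of the identity in $(G/G_0)_F$ produces the desired base of compact open normal subgroups $\{K_\alpha\}$ of $G/G_0$.

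For each such $K_\alpha$, a second application of Proposition \ref{pro1} shows that $(G/G_0)/K_\alpha$ satisfies weak QUP for Gabor transform. Since $K_\alpha$ is open, this quotient is discrete, and being a quotient of a Moore group it is a discrete Moore group. The remark preceding the statement then forces $(G/G_0)/K_\alpha$ to be abelian, whence $[G/G_0,\,G/G_0] \subseteq K_\alpha$. Intersecting over the neighbourhood base yields $[G/G_0,\,G/G_0] = \{e\}$, so $G/G_0$ is abelian.

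The principal obstacle, in my view, is the middle step: verifying that $G/G_0$ genuinely has a neighbourhood base of compact open normal subgroups. If the FC plus van Dantzig argument outlined above is considered too terse, the cleanest alternative is to cite the structural result (going back to Moore, see also Grosser--Moskowitz) that every Moore group is SIN, from which the claim is immediate for the totally disconnected group $G/G_0$. The rest of the argument is a routine double descent through Proposition \ref{pro1} combined with the discrete remark.
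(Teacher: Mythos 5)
Your argument is essentially the paper's own proof: the paper likewise invokes the structure theory of Moore groups (citing Moore) to obtain compact open normal subgroups $H_\alpha$ of $G$ with $\bigcap_\alpha H_\alpha = G_0$, applies Proposition \ref{pro1} to get weak QUP for the discrete Moore quotients $G/H_\alpha$, and concludes from the preceding remark that each is abelian, so $[G,G]\subseteq G_0$. Your only deviations are cosmetic: you descend in two stages (first through $G_0$, then through small compact open normal subgroups of $G/G_0$) and justify the existence of those subgroups via the SIN/van Dantzig route rather than quoting the projective-limit structure theorem directly.
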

 
\begin{proof}
  By structure Theorem \cite{moore}, $G/G_{0}$ is projective limit of discrete groups. Also $ G_0 $ is compact so there exist compact open normal subgroup $H_{\alpha}$ in G such that $\cap H_{\alpha} = G_0 $. Now for each $\alpha $ the weak QUP hold for $G/H_{\alpha}$ and hence $G/H_{\alpha}$ is abelian, which implies  the commutator $[G,G]\subset \cap H_{\alpha} = G_0. $
\end{proof}

\begin{prop}\label{pro3}
Let $G$ be a Lie Moore group with compact component of identity $ G_0 $ such that $G/G_0$ is abelian. If $ 0\neq f \in L^1(G) \cap L^2(G) \text{ and }  \psi $ be nonzero square integrable which is  constant on cosets of $G_0$. Then 
            \[m_G \times\mu\{(x,\pi):G_{\psi}f(x,\pi)\neq 0\} \geq 1. \]
\end{prop}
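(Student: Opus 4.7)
The plan is to establish the pointwise inequality
\[
\|G_\psi f(x,\pi)\|_{HS}^{\,2}\;\leq\;\|f\|_2^{\,2}\,\|\psi\|_2^{\,2}\qquad\text{for every }(x,\pi)\in G\times\widehat{G},
\]
after which the conclusion will follow immediately from the orthogonality relation $\|G_\psi f\|_{H^2(G\times\widehat{G})}^{\,2}=\|f\|_2^{\,2}\|\psi\|_2^{\,2}$ recorded in the introduction. Indeed, writing $\Omega=\{(x,\pi):G_\psi f(x,\pi)\neq 0\}$,
\[
\|f\|_2^{\,2}\|\psi\|_2^{\,2}\;=\;\int_{\Omega}\|G_\psi f(x,\pi)\|_{HS}^{\,2}\,dx\,d\pi\;\leq\;\|f\|_2^{\,2}\|\psi\|_2^{\,2}\cdot m_G\times\mu(\Omega),
\]
so $m_G\times\mu(\Omega)\geq 1$.

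To produce the pointwise bound I would first exploit the $G_0$-invariance of $\psi$. Since $G_0$ is normal, $\psi(x^{-1}y)=\dot\psi(\dot x^{-1}\dot y)$ for the induced function $\dot\psi$ on the discrete abelian quotient $G/G_0$, and Weil's formula gives
\[
G_\psi f(x,\pi)\;=\;\sum_{\dot y\in G/G_0}\overline{\dot\psi(\dot x^{-1}\dot y)}\,\Psi(\dot y,\pi),\qquad \Psi(\dot y,\pi):=\int_{G_0}f(yk)\,\pi(yk)^*\,dk.
\]
One checks that $\Psi(\dot y,\pi)=\widehat{f\chi_{yG_0}}(\pi)$ is the group Fourier transform of the restriction of $f$ to the coset $yG_0$, so it is well defined on $G/G_0$. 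Vector-valued Cauchy--Schwarz on $\ell^2(G/G_0)$ then yields
\[
\|G_\psi f(x,\pi)\|_{HS}^{\,2}\;\leq\;\|\dot\psi\|_{\ell^2(G/G_0)}^{\,2}\cdot\sum_{\dot y}\|\Psi(\dot y,\pi)\|_{HS}^{\,2}\;=\;\|\psi\|_2^{\,2}\cdot\sum_{\dot y}\|\Psi(\dot y,\pi)\|_{HS}^{\,2},
\]
so what must be verified is $\sum_{\dot y}\|\Psi(\dot y,\pi)\|_{HS}^{\,2}\leq\|f\|_2^{\,2}$ for every fixed $\pi\in\widehat{G}$.

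This final step is where the Moore hypothesis and the abelianness of $G/G_0$ enter, and is the point I expect to require the most care. Decomposing $\pi|_{G_0}=\bigoplus_\sigma m_{\sigma,\pi}\sigma$ in the compact group $G_0$ and expanding in a basis adapted to the isotypic decomposition yields
\[
\|\Psi(\dot y,\pi)\|_{HS}^{\,2}\;=\;\sum_{\sigma\in\widehat{G_0}}m_{\sigma,\pi}\,\bigl\|\widehat{f(y\,\cdot)}(\sigma)\bigr\|_{HS}^{\,2},
\]
while the Plancherel formula on $G_0$ reads $\sum_{\sigma}d_\sigma\|\widehat{f(y\,\cdot)}(\sigma)\|_{HS}^{\,2}=\int_{G_0}|f(yk)|^2\,dk$. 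It therefore suffices to prove $m_{\sigma,\pi}\leq d_\sigma$ for every $\sigma$ appearing in $\pi|_{G_0}$; summing over $\dot y$ and applying Weil then gives $\sum_{\dot y}\|\Psi(\dot y,\pi)\|_{HS}^{\,2}\leq\|f\|_2^{\,2}$, as required. To establish the multiplicity bound I would invoke Mackey's machine: since $G_0$ is compact normal and $G/G_0$ discrete abelian, each $\pi\in\widehat{G}$ is of the form $\mathrm{ind}_{H_\sigma}^G(\tilde\sigma\otimes\chi)$ with $H_\sigma=\mathrm{Stab}_G(\sigma)$, $\tilde\sigma$ an extension of $\sigma$ to $H_\sigma$, and $\chi\in\widehat{H_\sigma/G_0}$, and Mackey's subgroup theorem then yields the multiplicity-free decomposition $\pi|_{G_0}=\bigoplus_{gH_\sigma\in G/H_\sigma}\sigma^g$, forcing $m_{\sigma,\pi}\leq 1\leq d_\sigma$. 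The delicate ingredient is the triviality of the Mackey obstruction in $H^2(H_\sigma/G_0,\mathbb{T})$, which must be ensured by the Lie–Moore assumption on $G$; once granted, the remaining Plancherel–Cauchy–Schwarz calculation is routine.
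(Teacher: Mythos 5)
Your strategy is genuinely different from the paper's: you try to prove the pointwise bound $\|G_\psi f(x,\pi)\|_{HS}^2\leq\|f\|_2^2\|\psi\|_2^2$ and combine it with $\|G_\psi f\|_{H^2(G\times\widehat G)}^2=\|f\|_2^2\|\psi\|_2^2$, whereas the paper sets $\tilde f(\dot x)=\int_{G_0}f(xk)\,dk$, observes that $G_{\psi}\tilde f(\dot x,\pi)=G_\psi f(x,\pi)$ for $\pi\in\widehat{G/G_0}$, and invokes the bound $\geq 1$ for the discrete abelian quotient $G/G_0$. Your Cauchy--Schwarz reduction to the inequality $m_{\sigma,\pi}\leq d_\sigma$ for every $\sigma\in\widehat{G_0}$ occurring in $\pi|_{G_0}$ is correct (up to Haar-measure normalizations), and you have correctly flagged the weak point --- but that inequality is false under the stated hypotheses: the Lie--Moore assumption does not kill the Mackey obstruction. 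Concretely, let $H_p$ be the finite Heisenberg group of order $p^3$ and $G=(\mathbb{T}\times H_p)/N$ with $N=\{(z^{-1},z):z\in Z(H_p)\}$ for a fixed embedding $Z(H_p)\hookrightarrow\mathbb{T}$. Then $G$ is a compact (hence Moore) Lie group, $G_0=\mathbb{T}$ is central, compact and open, and $G/G_0\cong(\mathbb{Z}/p\mathbb{Z})^2$ is abelian; yet for each character $\sigma:t\mapsto t^k$ of $G_0$ with $p\nmid k$ the unique $\pi\in\widehat G$ lying over $\sigma$ is $p$-dimensional with $\pi|_{G_0}=p\cdot\sigma$, so $m_{\sigma,\pi}=p>1=d_\sigma$. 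Here $H_\sigma=G$ and the obstruction in $H^2((\mathbb{Z}/p\mathbb{Z})^2,\mathbb{T})$ is precisely the nontrivial class of the extension $1\to\mathbb{T}\to G\to(\mathbb{Z}/p\mathbb{Z})^2\to 1$ pushed forward along $\sigma$.

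The gap cannot be patched, because the same example violates the conclusion of the proposition itself: take $\psi=\chi_{G_0}$ (constant on cosets of $G_0$) and $f=\sigma\cdot\chi_{G_0}$ with $p\nmid k$. Then $G_\psi f(x,\pi)\neq 0$ forces $x\in G_0$ and $\pi$ equal to the single $p$-dimensional class over $\sigma$, so $m_G\times\mu\{(x,\pi):G_{\psi}f(x,\pi)\neq 0\}=m_G(G_0)\,d_\pi=p/p^2=1/p<1$, which only attains the general lower bound $1/d(G)$ of Lemma~\ref{lem3}. (The paper's own argument also does not apply to this $f$: it tacitly needs $\tilde f=\int_{G_0}f(\cdot\,k)\,dk$ to be nonzero, which fails here.) So your Plancherel computation on $G_0$ and the $\ell^2(G/G_0)$ Cauchy--Schwarz step are fine, but the multiplicity-free decomposition of $\pi|_{G_0}$ on which everything hinges is unavailable; some further hypothesis --- for instance that every $\sigma$ occurring in some $\pi|_{G_0}$ extends to its stabilizer, or that $f$ has nonzero average over each $G_0$-coset so that the reduction to the abelian quotient applies --- is needed before either your argument or the paper's can be completed.
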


\begin{proof}
Define $ \tilde{f}(\dot{x}) = \int\limits_{G_0}f(xk)dk \text{ on } G/G_0. $ Now $\tilde{f} \in L^1(G/G_0). \text{ Since }   G_0 $ is open, therefore  $ \widehat{ G/G_0 } $ is compact and $ \mu({\widehat{G/G_0}}) < \infty. $ Moreover $ G/G_0 $ is abelian, therefore  $\tilde{f} \in L^2(G/G_0). $
Now consider,
\begin{align*}
G_{\psi}\tilde{f}(\dot{x},\pi) &= \sum\limits_{\dot{y}\in G/G_0}\tilde{f}(\dot{y})\psi(\dot{x}^{-1}\dot{y})\pi(y)^*dy\\
 &=\sum\limits_{\dot{y}\in G/G_0}\int\limits_{G_0}f(yk)\psi({x}^{-1}yk)\pi(yk)^*dydk\\
&=G_{\psi}f(x,\pi).
 \end{align*}
 Thus we have,
 \begin{align*}
& m_{G/G_0}\times \mu_{G/G_0} \{(\dot{x},\pi) \in G/G_0 \times \widehat{G/G_0} :G_{\psi}{\tilde{f}(\dot{x},\pi)} \neq 0 \}\\
  &= \sum\limits_{\dot{x} \in G/G_0}\int\limits_{\widehat{G/G_0}}\chi_{\{(\dot{x},\pi):G_{\psi}f(\dot{x},\pi)\neq 0 \}}(\dot{x},\pi)d{\dot{x}}d\pi \\
&=  \sum\limits_{\dot{x} \in G/G_0}\int\limits_{\widehat{G/G_0}}\int\limits_{G_0}\chi_{\{(xk,\pi):G_{\psi}f(xk,\pi)\neq 0 \}}(xk,\pi)dkd{\dot{x}}d\pi \\
&\leq \sum\limits_{\dot{x} \in G/G_0}\int\limits_{\widehat{G}}\int\limits_{G_0}\chi_{\{(xk,\pi):G_{\psi}f(xk,\pi)\neq 0 \}}(xk,\pi)dkd{\dot{x}}d\pi \\
&=\iint\limits_{G \widehat{G}}\chi_{\{(x,\pi):G_{\psi}f(x,\pi)\neq o \}}(x,\pi)dxd\pi\\
&= m_G \times \mu_{G}{\{(x,\pi):G_{\psi}f(x,\pi)\neq 0 \}}
 \end{align*}
 Since $G/G_0$ is abelian and $ \tilde{f},\psi $ are nonzero square integrable  functions. Hence it follows that
 \[
 m_{G/G_0}\times \mu_{G/G_0} \{(\dot{x},\pi) \in G/G_0 \times \widehat{G/G_0} :G_{\psi}{\tilde{f}(\dot{x},\pi)} \neq 0 \} \geq 1.\qedhere
 \]
 \end{proof}

\begin{lem}\label{pro2} 
Let G be a Moore group and $f, \psi \in L^2(G_F)$.

\noindent $(1)$ If g,$\phi\in L^2(G)$ is such that $g|_{G_F}=f$ and $\phi|_{G_F}=\psi$, then
\[
m_{G_F}\times\mu_{G_F}\{(x,\pi):G_{\psi}f(x,\pi)\neq 0\}\leq [G:G_F]m_G\times\mu_G\{(x,\pi):G_{\phi}g(x,\pi)\neq 0\}.
\]

\noindent $(2)$Let  $g = \tilde{f},\phi = \tilde{\psi},$ are the trivial extension of $f,\psi$ to all of G. Then
\[
m_{G_F}\times\mu_{G_F}\{(x,\pi):G_{\psi}f(x,\pi)\neq 0 \}\geq m_G\times\mu_G\{(x,\pi):G_{\phi}g(x,\pi)\neq 0 \}.
\]

\end{lem}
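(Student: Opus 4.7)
The strategy is to reduce both parts to a pointwise (in $x$) comparison of the Gabor transform supports on $\widehat{G_F}$ and on $\widehat{G}$, and then integrate in $x$. The main tool is the Kleppner--Lipsman Plancherel formula \cite{klep} for the open normal finite-index subgroup $G_F$ of $G$, used (as in equation \eqref{eq2} of Theorem \ref{thm1}) to compare the two Plancherel measures; under the natural normalisation $m_G|_{G_F} = m_{G_F}$, the outer $x$-integrations then fit together.

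For part $(1)$, I would fix $x \in G_F$. Because $g|_{G_F} = f$ and $\phi|_{G_F} = \psi$, the function $g_x^{\phi}$ on $G$ restricts to $f_x^{\psi}$ on $G_F$, so $G_\psi f(x,\pi) = \widehat{g_x^\phi|_{G_F}}(\pi)$ while $G_\phi g(x,\sigma) = \widehat{g_x^\phi}(\sigma)$. Applying Kleppner--Lipsman to these two Fourier transforms, in exactly the manner of equation \eqref{eq2} but using the opposite direction of the estimate, yields the pointwise comparison
\[
\mu_{G_F}\{\pi \in \widehat{G_F}: G_\psi f(x,\pi) \neq 0\} \leq [G:G_F]\,\mu_G\{\sigma \in \widehat{G}: G_\phi g(x,\sigma) \neq 0\},
\]
with the single power of $[G:G_F]$ reflecting that each $\sigma \in \widehat{G}$ contains at most $[G:G_F]$ irreducibles of $G_F$ in its restriction. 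Integrating this over $x \in G_F$ and enlarging the outer integral on the right from $G_F$ to $G$ finishes part $(1)$.

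For part $(2)$, I take $g = \tilde{f}$ and $\phi = \tilde{\psi}$. If $x \notin G_F$ and $y \in G_F$, then $x^{-1}y \notin G_F$, forcing $\tilde{\psi}(x^{-1}y) = 0$; together with $\mathrm{supp}(\tilde{f}) \subseteq G_F$, this shows $G_\phi g(x,\sigma) = 0$ for all $x \notin G_F$, confining the support of $G_\phi g$ to $G_F \times \widehat{G}$. For $x \in G_F$, $g_x^\phi$ is supported in $G_F$ and coincides with $f_x^\psi$ there, so $G_\phi g(x,\sigma) = \int_{G_F} f_x^\psi(y)\sigma|_{G_F}(y)^*\,dy$; decomposing $\sigma|_{G_F} = \bigoplus_i \pi_i$ into $G_F$-irreducibles via Mackey, one sees $G_\phi g(x,\sigma) \neq 0$ iff $G_\psi f(x,\pi_i)\neq 0$ for some $i$. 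The Kleppner--Lipsman realisation of $\mu_G$ as a pushforward of $\mu_{G_F}$ along the orbit map $\widehat{G_F} \to \widehat{G_F}/G$ then gives, for each $x \in G_F$, the pointwise bound $\mu_G\{\sigma: G_\phi g(x,\sigma)\neq 0\} \leq \mu_{G_F}\{\pi: G_\psi f(x,\pi)\neq 0\}$ with no extra factor, since each orbit in $\{\pi: G_\psi f(x,\pi)\neq 0\}$ is counted only once on each side after the pushforward. Integrating in $x$ and using $m_G|_{G_F} = m_{G_F}$ yields the stated inequality.

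The principal obstacle is tracking the Plancherel constants in the two opposite directions of Kleppner--Lipsman: $(1)$ requires a factor of $[G:G_F]$ because a single $\sigma$ can cover several $\pi$'s in its restriction, whereas $(2)$ needs no extra factor because in this direction one only sees the orbit and the zero/nonzero information already determines the answer. Once the correct Plancherel comparison is identified from \cite{klep}, the remaining steps are routine.
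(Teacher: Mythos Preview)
Your proposal is correct and follows essentially the same architecture as the paper's proof: reduce to a pointwise comparison (for fixed $x\in G_F$) between $\mu_{G_F}\{\pi:G_\psi f(x,\pi)\neq 0\}$ and $\mu_G\{\sigma:G_\phi g(x,\sigma)\neq 0\}$, then integrate in $x$ and enlarge the domain from $G_F$ to $G$. The only substantive difference is the source of the pointwise Plancherel comparison: the paper does not go back to Kleppner--Lipsman \cite{klep} or sketch the Mackey decomposition as you do, but simply quotes ready-made inequalities from Kaniuth \cite{kaniuth1}, namely Lemma~2.2 for part~(1) (giving exactly the factor $[G:G_F]$) and Lemma~3.7 for part~(2) (giving the factor~$1$ for trivial extensions). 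Your explicit observation that $G_\phi g(x,\sigma)=0$ for $x\notin G_F$ in part~(2) is something the paper uses tacitly when it passes from $\iint_{G_F\times\widehat{G}}$ to $\iint_{G\times\widehat{G}}$ in its last line, so making it explicit is an improvement.
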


\begin{proof}

$(1)$ We can assume that $m_G\times\mu_G\{(x,\pi):G_{\phi}g(x,\pi)\neq 0\}  < \infty $, 
For $x \in G_F,\  g_{\phi}^x|_{G_F} = f_{\psi}^x $ then by using (\cite{kaniuth1},lemma2.2) we get that \[\mu_{G_F}{\{\pi : G_{\psi}f(x,\pi)\neq 0 \}}\leq [G:G_F]\mu_{G}\{\pi : G_{\phi}g(x,\pi) \neq 0\}
\]
i.e.$ \int\limits_{\widehat{G_F}}\chi_{\{\pi : G_{\psi}f(x,\pi)\neq 0\}}(\pi)d\pi \leq [G:G_F] \int\limits_{\widehat{G}}\chi_{\{\pi:G_{\phi}g(x,\pi)\neq 0\}}(\pi)d\pi,$

\noindent On integrating both sides we get, 
\begin{align*}
\iint\limits_{G_F \widehat{G_F}}\chi_{\{(x,\pi):G_{\psi}f(x,\pi)\neq 0\}}(x,\pi)dxd\pi &\leq[G:G_F] \iint\limits_{G_F \widehat{G}}\chi_{\{(x,\pi):G_{\phi}g(x,\pi)\neq 0\}}(x,\pi)dxd\pi\\
&\leq[G:G_F] \iint\limits_{G \widehat{G}}\chi_{\{(x,\pi):G_{\phi}g(x,\pi)\neq 0\}}(x,\pi)dxd\pi\\
\end{align*} 
or equivalently,
$m_{G_F}\times \mu_{\widehat{G_F}}\{(x,\pi):G_{\psi}f(x,\pi)\neq 0\} \leq [G:G_F]m_{G}\times \mu_{\widehat{G}}\{(x,\pi):G_{\phi}g(x,\pi)\neq 0\} $

\noindent $(2)$ For almost every  $ x \in G_F, \ g_{\phi}^x$ is trivial extension of $f_{\phi}^x.$\\ Then by (\cite{kaniuth1},Lemma 3.7)it follows that
\begin{align*}
\mu_{\widehat{G}} \{\pi: G_{\phi}g(x,\pi)\neq 0 \}&\leq \mu_{\widehat{G_F}}\{\pi : G_{\phi}f(x,\pi)\neq 0 \},\\
\int\limits_{\widehat{G}}\chi_{\{\pi:G_{\phi}g(x,\pi)\neq 0 \}}(\pi)d\pi &\leq \int\limits_{\widehat{G_F}}\chi_{\{\pi:G_{\psi}f(x,\pi)\neq 0 \}}(\pi)d\pi. \\
\end{align*}
Thus
\begin{align*}
 \iint\limits_{G_F \widehat{G}}\chi_{\{(x,\pi):G_{\phi}g(x,\pi)\neq 0 \}}(\pi)dxd\pi &\leq \iint\limits_{G_F \widehat{G_F}}\chi_{\{(x,\pi):G_{\psi}f(x,\pi)\neq 0 \}}(\pi)dxd\pi. \\
 i.e.  \iint\limits_{G \widehat{G}}\chi_{\{(x,\pi):G_{\phi}g(x,\pi)\neq 0 \}}(\pi)dxd\pi &\leq \iint\limits_{G_F \widehat{G_F}}\chi_{\{(x,\pi):G_{\psi}f(x,\pi)\neq 0 \}}(\pi)dxd\pi. \\
  m_G\times\mu_G\{(x,\pi):G_{\phi}g(x,\pi)\neq 0 \}&\leq m_{G_F}\times\mu_{G_F}\{(x,\pi):G_{\psi}f(x,\pi)\neq 0 \}.\qedhere
 \end{align*}
\end{proof}

\noindent As $G_F $ is a subgroup of finite index, so  QUP of Gabor transform holds for G if and only if it holds for $G_{F}$, but it may not be true for weak QUP of Gabor transform. Every nonabelian discrete group for which $G_F$ is abelian will serve our purpose $ e.g. (Z\rtimes \{1,-1\})$. But if $G$ has weak QUP of Gabor transform then (ii) of Lemma \ref{pro2} we can conclude that $G_F$   has weak QUP of Gabor transform. Moreover if $G_F$ has weak QUP of Gabor transform then we at least have the following,
\[
m_G\times\mu_G\{(x,\pi):G_{\phi}g(x,\pi)\neq 0 \}\geq \frac{1}{[G:G_F]}  \text{ for all nonzero } \phi, g  \in L^2(G).
\]

\noindent Next we consider bounded representation dimension group.\linebreak
 Define d(G) = sup\{dim\ $\pi :\pi \in \widehat{G} \}.$ 
\begin{lem}\label{lem3}

Let $G$ be a group with bounded representation dimension. Then
\[
m\times\mu \{(x,\pi)\in G\times\widehat{G}: G_\psi f(x,\pi)\neq 0\} \geq \frac{1}{d(G)} \text{ for } 0  \neq f, \psi \in L^2(G).
\] 
\end{lem}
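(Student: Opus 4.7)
The plan is to combine the Plancherel-type identity for the Gabor transform (already cited in the introduction, namely $\|G_\psi f\|_{H^2(G\times\widehat G)}^2=\|f\|_2^2\|\psi\|_2^2$) with two elementary pointwise bounds: one comparing the Hilbert--Schmidt norm to the operator norm via the dimension, and one comparing the operator norm to $\|f\|_2\|\psi\|_2$ via Cauchy--Schwarz. The target inequality then drops out by restricting the Plancherel integral to the support set.

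First I would fix nonzero $f,\psi\in L^2(G)$ and write $E=\{(x,\pi)\in G\times\widehat G:G_\psi f(x,\pi)\neq 0\}$. Since $G_\psi f(x,\pi)=\widehat{f_x^\psi}(\pi)$ with $f_x^\psi(y)=f(y)\overline{\psi(x^{-1}y)}$, and since $f_x^\psi\in L^1(G)$ for a.e.\ $x$ by Cauchy--Schwarz together with unimodularity of $G$, the standard bound for the group Fourier transform gives
\[
\|G_\psi f(x,\pi)\|_{\mathrm{op}}\leq \|f_x^\psi\|_1=\int_G|f(y)||\psi(x^{-1}y)|\,dy\leq \|f\|_2\|\psi\|_2,
\]
the last step by Cauchy--Schwarz after the substitution $u=x^{-1}y$.

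Next I would use that for any operator $T$ on $\mathcal H_\pi$ one has $\|T\|_{\mathrm{HS}}^2\leq (\dim\mathcal H_\pi)\|T\|_{\mathrm{op}}^2\leq d(G)\|T\|_{\mathrm{op}}^2$ whenever $d(G)<\infty$. Applied to $T=G_\psi f(x,\pi)$ and combined with the previous step, this yields the pointwise estimate
\[
\|G_\psi f(x,\pi)\|_{\mathrm{HS}}^2\leq d(G)\,\|f\|_2^2\|\psi\|_2^2
\]
uniformly in $(x,\pi)$.

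Finally I would integrate this bound over $E$ and invoke the Plancherel identity recorded in the introduction:
\[
\|f\|_2^2\|\psi\|_2^2=\int_{G\times\widehat G}\|G_\psi f(x,\pi)\|_{\mathrm{HS}}^2\,dx\,d\pi=\int_{E}\|G_\psi f(x,\pi)\|_{\mathrm{HS}}^2\,dx\,d\pi\leq d(G)\,\|f\|_2^2\|\psi\|_2^2\,(m\times\mu)(E).
\]
Since $\|f\|_2\|\psi\|_2\neq 0$, dividing gives $(m\times\mu)(E)\geq 1/d(G)$, which is the claim. There is no serious obstacle here; the only thing to be slightly careful about is to ensure the pointwise operator-norm bound is applied a.e.\ (the Plancherel measure could in principle be supported where $\dim\pi=d(G)$ is achieved, but $d(G)$ is a uniform bound so this is not an issue).
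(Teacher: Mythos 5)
Your proposal is correct and follows essentially the same route as the paper: a pointwise bound $\|G_\psi f(x,\pi)\|_{\mathrm{HS}}^2\leq d(G)\,\|f\|_2^2\|\psi\|_2^2$ combined with the Plancherel identity $\|G_\psi f\|^2_{H^2(G\times\widehat G)}=\|f\|_2^2\|\psi\|_2^2$ and restriction of the integral to the support set. The only difference is cosmetic: the paper derives the pointwise bound by expanding $\mathrm{tr}\bigl(\pi(f_\psi^x)^*\pi(f_\psi^x)\bigr)$ in an orthonormal basis, while you obtain it more cleanly from $\|T\|_{\mathrm{HS}}^2\leq(\dim\mathcal H_\pi)\|T\|_{\mathrm{op}}^2$ and $\|\pi(f_x^\psi)\|_{\mathrm{op}}\leq\|f_x^\psi\|_1\leq\|f\|_2\|\psi\|_2$.
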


\begin{proof}
 We can assume  that $ m\times\mu \{(x,\pi) \in G\times\widehat{G}: G_\psi f(x,\pi)\neq 0 \} < \infty. $
 For almost every  $ (x,\pi )\in G\times \widehat{G},  G_\psi f(x,\pi) = f_\psi ^{x}(\pi) $  and $ G_\psi f(x,\pi)^{*}  = f_\psi ^{x}(\pi)^*.$ 
Now $ \|G_\psi f(x,\pi)\|^2_{HS} = tr[G_\psi f(x,\pi)^*G_\psi f(x,\pi)]
                                         = tr[\pi({f_\psi^x})^*\pi(f_\psi^x)]$ for almost every $(x,\pi) \in G\times \widehat{G}.$
                                         
  Let $ \{ \xi_{\pi,j}1\leq j \leq d_{\pi} \}$ be an orthonormal basis of $\mathcal{H(\pi)}.$ Then,
   \begin{align*}                                      
 tr(\pi(f_\psi^x)^*\pi(f_\psi^x))&= \sum\limits_{j = 1}^{n} \langle \pi(f_\psi^x)^*\pi(f_\psi^x)\xi_{\pi,j},\xi_{\pi,j}\rangle\\
 &=\sum\limits_{j = 1}^{n}\int_G f_\psi^x(y)\langle \pi(y)\xi_{\pi,j},\pi(f_\psi^x\xi_{\pi,j})\rangle dy\\
 & = \sum\limits_{j=1}^{n}\int_G f_\psi^x(y)\overline{\langle \pi(f_\psi^x)\xi_{\pi,j},\pi(y)\xi_{\pi,j}}\rangle dy\\
  &=\sum\limits_{j=1}^{n}\iint\limits_{G,G}f_\psi^x(y) \overline{f_\psi^x(z)\langle \pi(z)\xi_{\pi,j},\pi(y)\xi_{\pi,j}\rangle}dydz\\
         &= \sum\limits_{j=1}^n \iint\limits_{G,G}f_\psi^x(y)f_\psi^x(z)\langle \pi(y)\xi_{\pi,j},\pi(z)\xi_{\pi,j}\rangle dydz\\
           &\leq d_{\pi}\| f_\psi^x \|_1^2 \leq d_G \|f\|_2^2\|\psi\|_2^2.
  \end{align*}        
 Consequently,  $tr(\pi(f_\psi^x)^*\pi(f_\psi^x))  \leq d_G\|f\|_2^2\|\psi\|_2^2.$
 Now
\begin{align*}
  \| G_\psi f\|^2 &= \int_{G\times\widehat{G}}\| G_\psi f(x,\pi)\| ^2_{(x,\pi)} d\sigma(x,\pi)\\ 
&\leq m\times\mu\{(x,\pi):G_\psi f(x,\pi)\neq 0\}\text{sup}_{x\in G,\pi \in \widehat{G}}\|G_\psi f(x,\pi)\|^2_{\text{HS}}\\
&\leq m\times\mu\{(x,\pi):G_\psi f(x,\pi)\neq 0\}d_G \|f\|_2^2\|\psi\|_2^2.\\
  \end{align*}
 or equivalently $\| f \|^2\| \psi\|^2 \leq  m\times \mu \{(x,\pi): G_\psi f(x,\pi)\neq 0 \}d_G \|f\|_2^2\|\psi\|_2^2,$
 Hence $\frac{1}{d(G)} \leq  m\times \mu \{(x,\pi): G_\psi f(x,\pi)\neq 0 \}.$
       \end{proof}

\begin{rem}
$(1)$ Weak QUP for Gabor transform always hold for abelian groups as $M = 1$. 

\noindent $(2)$ A discrete moore group satisfy weak QUP of Gabor Transform  if and only if it is abelian. 
\end{rem}

\noindent  If $G$ is a group of bounded representation dimension with connected component of identity non compact then $ m\times\mu \{(x,\pi):G_\psi f(x,\pi) \neq  0\} = \infty $ for every $ 0 \neq f, \psi \in L^2(G)$. So, the functions for which equality hold in lemma \ref{lem3} can only exist when $G_0$ is compact. 
 
 Let $N$ be a closed normal subgroup of $G$. Then  a $G-$invariant character $\gamma $ of $N$ is a continuous homomorphism from $N$ to the circle Group $ \mathbb{T}$ satisfying $\gamma(y^{-1}xy) = \gamma(x)$ for all $ x \in N$ and $y\in G$.
 
\noindent $ \widehat{G}_{\gamma} = \{\pi \in \widehat{G}:\pi(x) = \gamma(x)I_{\mathbb{H_{\pi}}} \text{ for all $ x \in N$}\} $
 
 \begin{lem}
 Let $G$ be a locally compact group of bounded representation dimension. If there exist a compact open normal subgroup $N$ of $G$ and a $G-$invariant character $\gamma$ of $N$ such that $d_\pi$ = d$($G$)$ for all most all $\pi \in \widehat{G}_{\chi}.$ Then there exist non-zero function  $  f , \psi \in L^2(G)$
 satisfying,
 \[ m\times \mu\{(x,\pi):G_\psi f(x,\pi) \neq 0 \} = \frac{1}{d(G)}.\] 
  \end{lem}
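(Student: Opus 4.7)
The plan is to exhibit an explicit pair $(f,\psi)$ achieving equality in Lemma \ref{lem3}. Motivated by the choice $f = \psi = \chi_H$ used in Theorem \ref{th2} (the trivial-character case), I would take
\[ f = \chi_N, \qquad \psi = \overline{\gamma}\,\chi_N, \]
both of which lie in $L^2(G)$ since $N$ is compact open.

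First I would compute $G_\psi f(x,\pi)$ directly. The integrand is supported where $y \in N$ and $x^{-1}y \in N$, which forces $x \in N$; for such $x$, using that $\gamma$ is a homomorphism on $N$, the integrand collapses to $\overline{\gamma(x)}\,\gamma(y)\,\chi_N(y)$, giving
\[ G_\psi f(x,\pi) = \overline{\gamma(x)}\,\chi_N(x) \int_N \gamma(y)\pi(y)^*\, dy. \]
The adjoint operator $B_\pi := \int_N \overline{\gamma(y)}\pi(y)\,dy$ commutes with every $\pi(g)$: this follows from the $G$-invariance of $\gamma$ combined with the conjugation-invariance of Haar measure on the compact group $N$. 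By Schur's lemma $B_\pi = c_\pi I$, and since the $\gamma$-eigenspace of $N$ in $\mathcal{H}_\pi$ is automatically $G$-invariant (again because $\gamma$ is $G$-invariant), irreducibility of $\pi$ forces that eigenspace to be $\{0\}$ or all of $\mathcal{H}_\pi$. A trace computation then gives $B_\pi = m(N)\,I$ precisely when $\pi \in \widehat{G}_\gamma$, and $B_\pi = 0$ otherwise.

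Consequently $\{(x,\pi): G_\psi f(x,\pi) \neq 0\} = N \times \widehat{G}_\gamma$, so the desired product measure equals $m(N)\mu(\widehat{G}_\gamma)$. To evaluate $\mu(\widehat{G}_\gamma)$ I would invoke the Plancherel identity $\|G_\psi f\|^2_{H^2(G\times\widehat{G})} = \|\psi\|_2^2\,\|f\|_2^2 = m(N)^2$ recorded in the introduction. From the explicit formula one reads off $\|G_\psi f(x,\pi)\|_{HS}^2 = m(N)^2 d_\pi$ on the support, so
\[ m(N)^3 \int_{\widehat{G}_\gamma} d_\pi\, d\mu(\pi) = m(N)^2. \]
The hypothesis $d_\pi = d(G)$ almost everywhere on $\widehat{G}_\gamma$ then yields $\mu(\widehat{G}_\gamma) = 1/(m(N)\,d(G))$, and therefore $m(N)\mu(\widehat{G}_\gamma) = 1/d(G)$, as required.

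The main technical point is the Schur-lemma identification of $B_\pi$: it rests on the $G$-invariance of $\gamma$ used in tandem with the conjugation-invariance of Haar measure on $N$ (automatic since $N$ is compact). Once that is in place, the remainder is a direct computation followed by the Plancherel identity for the Gabor transform established earlier in the excerpt.
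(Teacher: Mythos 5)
Your proof is correct, and it uses the same basic construction as the paper: a pair of functions supported on $N$, one of them twisted by $\overline{\gamma}$ (the paper takes $f=\overline{\gamma}\chi_N$, $\psi=\chi_N$; your swap of the roles is immaterial), whose Gabor transform is supported exactly on $N\times\widehat{G}_{\gamma}$. Where you genuinely diverge is in how the two key facts are established. First, the paper merely asserts that the transform vanishes off $\widehat{G}_{\gamma}$, whereas you justify it: $B_\pi=\int_N\overline{\gamma(y)}\pi(y)\,dy$ is (up to the factor $m(N)$) the orthogonal projection onto the $\gamma$-eigenspace, it is scalar by Schur's lemma thanks to the $G$-invariance of $\gamma$ and the conjugation-invariance of Haar measure on the compact normal subgroup $N$, so it is $m(N)I$ on $\widehat{G}_{\gamma}$ and $0$ elsewhere. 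Second, and more substantively, the paper evaluates $m(N)\,\mu(\widehat{G}_{\gamma})=1/d(G)$ by citing Lemma 1.2 of Kaniuth's paper on minimizing functions, while you rederive this identity internally from the Gabor--Plancherel isometry $\|G_\psi f\|^2_{H^2}=\|f\|_2^2\|\psi\|_2^2$: since $\|G_\psi f(x,\pi)\|_{HS}^2=m(N)^2 d_\pi$ on the support, the isometry gives $\int_{\widehat{G}_{\gamma}}d_\pi\,d\mu=m(N)^{-1}$, and the hypothesis $d_\pi=d(G)$ a.e.\ on $\widehat{G}_{\gamma}$ yields the claim. Your route is self-contained (it even shows $\mu(\widehat{G}_{\gamma})>0$ for free, since otherwise the isometry would force $f=0$), at the cost of a slightly longer computation; the paper's route is shorter but outsources exactly the measure identity you prove to an external lemma.
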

 
 \begin{proof}
 Define $ f = \overline{\gamma(x)}\chi_{N}(x) $   and $ \psi(x) = \chi_{N}(x).$ Then  \[G_\psi f(x,\pi) = \int_G f(y)\overline{\psi(x^{-1}y)}\pi(y)^* dy  = \int_N \overline{\gamma(x)}\chi_{N}(x^{-1}y)\pi(y)^* dy. \]

Thus, we have  $\{(x,\pi): G_\psi f(x,\pi) \neq 0 \} = N \times \widehat{G_{\chi}}.$ Thus by  $($\cite{kaniuth1}, Lemma 1.2$)$ it follows,
  \[ m\times \mu\{(x,\pi): G_\psi f(x,\pi) \neq 0 \} = m\times\mu (N \times \widehat{G_{\chi}}) = m(N)\mu(\widehat{G_{\chi}}) = \frac{1}{d_\pi}.\qedhere\]         
 \end{proof}
\noindent For existence of such groups see \cite{kaniuth1}.

\noindent  Now every  pair $(f,\psi)$ of nonzero functions in $L^2(G)$ such that \linebreak  $m\times \mu \{(x,\pi)\in G\times\widehat{G}: G_\psi f(x,\pi)\neq 0\} = \frac{1}{d(G)}$ is defined to be minimizing function.
 \begin{prop}
 Let $K$ be a compact normal subgroup of $G$ such that d$(\text{G/K}) = d(G)$, and suppose that there exists minimizing functions for $G/K$. Then there exist minimizing functions for $G$.
 \end{prop}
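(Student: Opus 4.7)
The strategy is to transport a minimizing pair on $G/K$ back to $G$ through the quotient map, using the measure preservation identity already established inside the proof of Proposition \ref{pro1}. Let $q\colon G\to G/K$ denote the quotient map and let $(\dot f,\dot\psi)$ be a minimizing pair for $G/K$, so that
\[
m_{G/K}\times\mu_{G/K}\{(\dot x,\dot\pi)\in G/K\times\widehat{G/K}: G_{\dot\psi}\dot f(\dot x,\dot\pi)\neq 0\}=\frac{1}{d(G/K)}.
\]

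First I would set $f(x)=\dot f(q(x))$ and $\psi(x)=\dot\psi(q(x))$. Because $K$ is compact, Weil's formula gives $\|f\|_{L^2(G)}^2=m_G(K)\|\dot f\|_{L^2(G/K)}^2<\infty$ and similarly for $\psi$, so $f,\psi\in L^2(G)$ and both are nonzero. Next, exactly as in the calculation carried out in the proof of Proposition \ref{pro1}, one obtains
\[
\langle G_\psi f(x,\pi)\xi,\eta\rangle=\begin{cases}\langle G_{\dot\psi}\dot f(\dot x,\pi)\xi,\eta\rangle,& \pi\in A(\widehat G,K),\\ 0,& \text{otherwise,}\end{cases}
\]
identifying $A(\widehat G,K)$ with $\widehat{G/K}$. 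Consequently the set $\{(x,\pi):G_\psi f(x,\pi)\neq 0\}$ is precisely $q^{-1}(\{(\dot x,\pi):G_{\dot\psi}\dot f(\dot x,\pi)\neq 0\})\times\widehat{G/K}$ (viewed inside $G\times\widehat G$), and again as in Proposition \ref{pro1} its $m_G\times\mu_G$-measure coincides with the $m_{G/K}\times\mu_{G/K}$-measure of the corresponding set on $G/K\times\widehat{G/K}$.

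Combining these observations yields
\[
m_G\times\mu_G\{(x,\pi):G_\psi f(x,\pi)\neq 0\}=\frac{1}{d(G/K)}=\frac{1}{d(G)},
\]
where the last equality uses the hypothesis $d(G/K)=d(G)$. Thus $(f,\psi)$ is a minimizing pair for $G$, as required. The only nontrivial point is the reduction of the Plancherel measure on $\widehat G$ restricted to $A(\widehat G,K)$ to the Plancherel measure on $\widehat{G/K}$, but this is exactly what is invoked in Proposition \ref{pro1}, so no new obstacle arises beyond re-using that identification.
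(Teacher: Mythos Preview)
Your proof is correct and follows essentially the same route as the paper: lift a minimizing pair from $G/K$ to $G$ via the quotient map and invoke the measure identity established in Proposition~\ref{pro1} together with $d(G/K)=d(G)$. The only blemish is the description of the support set as ``$q^{-1}(\{(\dot x,\pi):\dots\})\times\widehat{G/K}$'', which is notationally garbled (the $\times\widehat{G/K}$ is superfluous once you apply $q^{-1}$ in the first coordinate), but since you immediately appeal to Proposition~\ref{pro1} for the measure equality this does not affect the argument.
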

 
 \begin{proof}
 Let Haar measures on $G, K \text{ and } G/K$ be normalised so that Weil's formula hold and $m_K(K) = 1$.Then $\mu_{G/K}$ equals the measure induced from $\mu_G$ on $\widehat{G/K} \subset\widehat{G}.$ Now Let $g, \phi \in L^2(G/K) $ satisfy,
 \begin{center}
 $ m_{G/K}\times\mu_{G/K}{\{(\dot{x},\dot{\pi}):G_{\phi}g(\dot{x},\dot{\pi})\neq 0 \}} $
 \end{center}
 and define $f,\psi \in L^2(G)$ by $f(x) = g(\dot{x}), \psi(x) = \phi(\dot{x}).$ Then  by Proposition  \ref{pro1} we have,
  \[  
  m_{G}\times\mu_{G}{\{(x,\pi):G_{\psi}f(x,\pi)\neq 0 \}} =  m_{G/K}\times\mu_{G/K}{\{(\dot{x},\dot{\pi}):G_{\phi}g(\dot{x},\dot{\pi})\neq 0 \}} = 1/d(G).
  \]
 \end{proof}

 \section{Multipliers Extension of $\mathbb{T}$} 

  For a locally compact abelian group $G$ and  normalised multiplier $\omega$ on $G$, $h_{\omega}:G \to \widehat{G}$ is a continuous homomorphism where, $h_{\omega}(x)(y) = \omega(x,y)\omega(x,y)^{-1}.$ for every $x,y \in G.$
Also, $S_\omega =\{ x\in G :h_\omega(x,y)=1 , \text{ for all }y\in G\}  $ is a closed subgroup of $G$ and  $h_\omega(G) \text{ is a dense subgroup of }\widehat{G/S_\omega}\subset \widehat{G}$. For a normalised multiplier  $\omega \text{ on } G ,G(\omega)$ denote the central extension of $G$ defined by $\omega, i.e. \ G(\omega)= G\times\mathbb{T}$ with Weil topology and multiplication 
            $(x,s)(y,t) =  (xy, stw(x,y))$   for $x,y \in G $  and $s,t \in \mathbb{T}$. $G(\omega)$  is a locally compact group in which $\mathbb{T}$  is a central subgroup and $G(\omega) /\mathbb{T} \text{\ is isomorphic to G}$. .   The center of $ G(\omega)\ \text{is } S_\omega\times \mathbb{T}$.It is shown in \cite{} $\omega$ is type 1 if and only if $h_\omega$  is an open homeomorphism of G onto  $\widehat{G/S_\omega}$.

  \begin{prop}
Let $\omega$ be a type-I multiplier on an arbitrary abelian group $G$ and QUP for Gabor transform  holds for $G(\omega)$ then, $(S_{\omega})_{0}$ is non-compact. 
\end{prop}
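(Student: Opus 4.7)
I would prove the contrapositive: assume $(S_\omega)_0$ is compact and exhibit non-zero $f, \psi \in L^2(G(\omega))$ with $\{(u,\pi) : G_\psi f(u,\pi) \neq 0\}$ of finite $m \times \mu$ measure, violating QUP. Since $S_\omega$ is LCA with compact identity component, it admits a compact open subgroup $K$, and by Pontryagin duality the annihilator $K^\perp \subset \widehat{S_\omega}$ is then also compact and open. Fix any $n_0 \in \mathbb{Z} \setminus \{0\}$ and define
\[
f(x, s) = \chi_K(x)\, s^{n_0}, \qquad \psi(x, s) = \chi_K(x)
\]
on $G(\omega) = G \times \mathbb{T}$. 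Both lie in $L^2(G(\omega))$ and are non-zero.

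The computation of $G_\psi f$ exploits that $\mathbb{T}$ is central in $G(\omega)$: for each $\pi \in \widehat{G(\omega)}$ there is $n(\pi) \in \mathbb{Z}$ with $\pi(e,s) = s^{n(\pi)} I$, so $\pi((y,s))^* = s^{-n(\pi)} \pi((y,1))^*$. Writing out $G_\psi f((a,t),\pi)$ as an iterated integral and integrating first in $s \in \mathbb{T}$ produces the factor $\int_\mathbb{T} s^{n_0 - n(\pi)}\, ds = \delta_{n_0, n(\pi)}$, so the support in $\pi$ is confined to the sector $\{\pi : n(\pi) = n_0\}$. The remaining integrand carries the factor $\chi_K(y)\chi_K(a^{-1}y)$, which, since $K$ is a subgroup, vanishes unless $a \in K$. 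Hence the support in $u = (a,t)$ is contained in $K \times \mathbb{T}$, a set of finite $m_{G(\omega)}$-measure. Moreover, for $y \in K \subset S_\omega$ the element $(y,1)$ is central in $G(\omega)$, so Schur's lemma yields $\pi((y,1)) = c_\pi(y)\, I$ for a projective character $c_\pi$ of $S_\omega$ (with respect to the cocycle $\omega^{n_0}|_{S_\omega \times S_\omega}$), and the remaining integral reduces to $\bigl(\int_K \overline{c_\pi(y)}\, dy\bigr) I$.

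The final step — controlling the $\pi$-support inside the $n_0$-sector — is the main obstacle. Under the type-I hypothesis on $\omega$, Mackey theory for the central extension $\mathbb{T} \hookrightarrow G(\omega) \twoheadrightarrow G$ together with the Plancherel formula of Kleppner--Lipsman (used already in Proposition \ref{prop1}) parametrizes the $n_0$-sector by the relevant projective characters of $S_\omega$, with Plancherel density absolutely continuous with respect to Haar measure on $\widehat{S_\omega}$. Since $\omega|_{K \times K}$ is symmetric (because $K \subset S_\omega$ forces $h_\omega(y_1)(y_2)=1$), it is a coboundary on $K$, so after a fixed twist $c_\pi|_K$ becomes an honest character of $K$; then $\int_K \overline{c_\pi(y)}\, dy \neq 0$ precisely when this character is trivial, cutting out a translate of the compact open set $K^\perp$ of finite Plancherel measure. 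Combining with the $u$-side bound,
\[
m \times \mu \{(u,\pi) : G_\psi f(u,\pi) \neq 0\} \leq m_G(K)\, \mu(K^\perp) < \infty,
\]
contradicting QUP. The delicate points are the trivialization of $\omega|_K$ and the verification that the Plancherel density on the $n_0$-sector is locally integrable under the type-I parametrization, so that $K^\perp$ genuinely has finite measure.
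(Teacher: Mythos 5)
Your proposal is correct and follows essentially the same route as the paper: the paper uses the same witness functions ($f(x,t)=t\,\chi_K(x)$, $\psi(x,t)=\chi_K(x)$, i.e.\ your $n_0=1$), the same support computation in $(x,t)$, and confines the $\pi$-support to $\widehat{G(\omega)}_{\chi}$ for the character $\chi(x,t)=t$ of the compact open central subgroup $H=K\times\mathbb{T}$. The ``delicate point'' you flag is resolved there by observing that $\widehat{G(\omega)}_{\chi}$ is homeomorphic to $\widehat{Z}_{\chi}$, a coset of the compact annihilator of $H$ in $\widehat{Z}$, hence compact and of finite Plancherel measure --- the same fact as your finiteness of $\mu$ on a translate of $K^{\perp}$, so no separate local-integrability argument is needed.
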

      
\begin{proof}
 Let us assume that $(S_{\omega})_0$ is compact, which means that $ S_{\omega} $ contains a compact open subgroup $K$ and let $H = K\times\mathbb{T}$.  So $H$ is compact open subgroup of $Z = S_{\omega}\times\mathbb{T}$, the center of G($\omega$). Define $\chi\in \widehat{H} $ by\linebreak$\chi(x,t)=t,x \in K,t\in \mathbb{T}, \widehat{Z_{\chi}}=\{\alpha\in \widehat{Z} :\alpha|_H=\chi\}  \text{ and } 
     \widehat{G(\omega)_{\chi}} = \{\alpha\in\widehat{ G(\omega)}:\alpha|H \text{ is a multiple of  } \chi\}.$
   For every  $\alpha\in \widehat{Z_\chi}  \text{ there is  a unique } \pi_\alpha \in \widehat{G(\omega)} $such that $\pi_{\alpha}|Z \text{ is a multiple of } \alpha\text{ and }  \alpha\rightarrow \pi_\alpha $ is a homeomorphism between $ \widehat{Z_\chi}\text{ and } \widehat{G(\omega)}_\chi $\cite{green}.
        
\noindent Since $H$ is open in $Z$ it follows that $\widehat{G(\omega)}_{\chi}$ is compact and hence has finite Plancherel mesure.
        Define  $ f,\psi : G(\omega)\rightarrow \mathbb{C} \text{  where } f(x,t) = t\chi_{{K}}(x)$  and $ \psi(x,t) = \chi_K(x)$, then $f,\psi \in L^2(G{(\omega}))$ and we have 
          \begin{align*}
 G_\psi f(x,t,\pi) &= \int_G\int_\mathbb{T} f(y,s)\overline{\psi((x,t)^{-1}(y,s))}\pi(y,s)^{*} dyds\\
   &= \int_G\int_\mathbb{T}s\chi_K(y)\overline{\psi(x^{-1}y,\bar{t}s\bar{\omega(x^{-1},y)})}\pi(y,s)^* dyds\\
    &= \int_K\int_\mathbb{T} s\chi_K(x^{-1}y)\pi(y,s)^* dy ds.
    \end{align*}
 
\noindent      If $\ x \notin K \text{ then for all }y \in K,  x^{-1}y \notin K. $ Also  if $\pi \in \widehat{G(\omega)}/\widehat{G(\omega)_{\chi}} 
    \text{ then } \pi|_{K\times\mathbb{T}}$ is not a multiple of $\chi.$ So it follows that,
    \[\{(x,t,\pi): G_{\psi}f(x,t,\pi) \neq 0\}\subseteq K \times \mathbb{T}\times \widehat{G(\omega)_{\chi}}.
    \] 
      Thus,
       $$m_G\times \mu \{(x,t,\pi): G_{\psi}f(x,t,\pi) \neq 0\}  < \infty ,$$ 
    
 \noindent which is a contradiction to the hypothesis.    
  \end{proof}

\begin{prop}\label{pro4}

 Let $G$ be a locally compact, unimodular group of type-I and $ \omega $ is type-I multiplier on $G$.  If $ G(\omega)$ has QUP for  Gabor transform  then  $(G,\widehat{G}^\omega) $ has QUP for Gabor transform where   $\widehat{G}^\omega$ is the set of equivalence classes of irreducible  $\omega$  representation.
 
 \end{prop}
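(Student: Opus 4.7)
The plan is to lift $f$ and $\psi$ to functions on the central extension $G(\omega)$ and then invoke the assumed QUP for Gabor transform on $G(\omega)$. Given $f\in L^2(G)$ and a nonzero window $\psi\in L^2(G)$ such that the support of $G_\psi^\omega f$ in $G\times\widehat G^\omega$ has finite $m_G\times\mu^\omega$ measure (where $\mu^\omega$ denotes the Plancherel measure on $\widehat G^\omega$), I set
\[
F(x,t)=t\,f(x),\qquad \Psi(x,t)=\psi(x)
\]
on $G(\omega)=G\times\mathbb{T}$. Both lie in $L^2(G(\omega))$ with $\|F\|_2=\|f\|_2$ and $\|\Psi\|_2=\|\psi\|_2$, and $\Psi$ is a nonzero window, so they are legitimate inputs for the Gabor transform on $G(\omega)$.

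The core computation is to evaluate $G_\Psi F(x,t,\pi)$ for $\pi\in\widehat{G(\omega)}$. Since $\mathbb{T}$ is central, Schur's lemma forces $\pi(e,s)=s^m I_{\mathcal H_\pi}$ for some $m\in\mathbb{Z}$, and hence $\pi(y,s)=s^m\tilde\pi(y)$ where $\tilde\pi$ is a projective representation of $G$; for $m=1$ this $\tilde\pi$ is an honest irreducible $\omega$-representation, i.e.\ a point of $\widehat G^\omega$. Using $(x,t)^{-1}(y,s)=(x^{-1}y,\,\bar t s\,\overline{\omega(x,x^{-1})}\omega(x^{-1},y))$ together with the fact that $\Psi$ is independent of the $\mathbb{T}$-coordinate, one obtains
\[
G_\Psi F(x,t,\pi)=\Bigl(\int_G f(y)\overline{\psi(x^{-1}y)}\tilde\pi(y)^*\,dy\Bigr)\int_\mathbb{T} s\,\bar s^{\,m}\,ds.
\]
The last integral equals $\delta_{m,1}$, so $G_\Psi F(x,t,\pi)=0$ unless $m=1$, and in that case the bracketed factor is exactly $G_\psi^\omega f(x,\tilde\pi)$.

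Under the central-subgroup Mackey correspondence (valid because $\omega$ is type I) the map $\tilde\pi\mapsto\pi$ is a homeomorphism between $\widehat G^\omega$ and the subset of $\widehat{G(\omega)}$ on which $\pi|_{\mathbb{T}}$ is a multiple of the identity character, and it transports $\mu^\omega$ to the restriction of $\mu_{G(\omega)}$. Combined with $m_\mathbb{T}(\mathbb{T})=1$ this yields
\[
(m_{G(\omega)}\times\mu_{G(\omega)})\bigl\{(x,t,\pi):G_\Psi F(x,t,\pi)\neq 0\bigr\}=(m_G\times\mu^\omega)\bigl\{(x,\tilde\pi):G_\psi^\omega f(x,\tilde\pi)\neq 0\bigr\}<\infty.
\]
The hypothesis that $G(\omega)$ has QUP for Gabor transform then forces $F=0$ a.e., and because $|t|=1$ this gives $f=0$ a.e.

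The hard part will be the second step: tracking the cocycle factors in $(x,t)^{-1}(y,s)$ through $\overline{\Psi}$ and verifying that the $s$-integral selectively annihilates every component of $\widehat{G(\omega)}$ other than the one corresponding to $\widehat G^\omega$. The identification of the Plancherel measures between $\widehat G^\omega$ and this distinguished subset of $\widehat{G(\omega)}$ is the other essential ingredient, but it follows from the standard central-character decomposition for type-I multipliers.
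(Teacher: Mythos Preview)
Your proposal is correct and follows essentially the same route as the paper: lift $f,\psi$ to $F(x,t)=tf(x)$ and $\Psi(x,t)=\psi(x)$ on $G(\omega)$, decompose $\widehat{G(\omega)}$ according to the central character $\pi|_{\mathbb{T}}=s^m$, observe that the $\mathbb{T}$-integral kills every component except $m=1$ where $G_\Psi F$ reduces to the projective Gabor transform $G_\psi^\omega f$, then invoke the measure identification $\widehat{G}^\omega\cong\widehat{G(\omega)}_1$ and the QUP hypothesis on $G(\omega)$. The only cosmetic difference is that you assert equality of the two support measures while the paper records only the containment (which already suffices).
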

 
\begin{proof} 

Let $ f\in L^2(G)\text{ and} \ 0\neq \psi \in L^2(G)$ be a window function such that  $$  m\times \mu \{(x,\pi)\in (G,\widehat{G}^\omega): G_\psi f(x,\pi)\neq 0 \} < \infty. $$ 
\noindent Extend $ f \text{ and } \psi $ on $  G(\omega)$  as 
$ F(x,t)= tf(x)  \text{ and }  
  \Psi(x,t) = \psi(x)$ and we have  
 \begin{align}
 G_\Psi F(x,t,\pi)&= \int_{G(\omega)} F(y,s)\Psi((x,t)^{-1}(y,s))\pi(y,s)^* d\mu(y,s)\\
  &=\int_G\int_\mathbb{T}sf(y)\psi(x^{-1}y)\pi(y,s)^*dyds\label{h}
 \end{align}  
  Now   if $ \tau \in \widehat{G}^\omega $ then we can define $\rho_\tau \in 
\widehat{G(\omega)}$  by $  \rho_\tau(x,t)= t\tau(x),x\in G, t\in \mathbb{T} $

\noindent Then  $\tau\rightarrow \rho_\tau $  is a homeomorphism between $ \widehat{G}^\omega$ and
     $$ \widehat{G(\omega)_1} = \{ \rho \in \widehat{G(\omega)}: \rho|\mathbb{T} \text{  is  a  multiple  of } id_\mathbb{T}\}$$ and this maps the projective Plancherel measure on  $ \widehat{G}^\omega$ onto the restriction to $ \widehat{G(\omega)}_1 $ of the Plancherel measure on $ \widehat{G(\omega)}$.
   Now  if $\pi \in \widehat{G(\omega)}_1$, then $\pi = \pi_\tau$   for  some $  \tau \in \widehat{G}^\omega$  and $ \pi_\tau(y,s) = s\tau(y).$ 
   
 \noindent  Then(\ref{h}) becomes 
  \begin{align*}
 G_\Psi F(x,t,\pi)&= {T}sf(y)\psi(x^{-1}y)\overline{s}\tau(y)^*dsdy\\
  &= \int_G f(y)\psi(x^{-1}y)\tau(y)^*dy\\
  & =G_\psi f(x,\tau).
  \end{align*}
  
\noindent  and if $  \pi \notin \widehat{G(\omega)}_1 \text{ then }   \pi(x,t) = t^k\pi(x,1) \text{ for  some }  k\in \mathbb{Z}, k \ \neq 1$ and  hence (\ref{h})  becomes 
   \begin{align*}
   &\int_G\int_\mathbb{T}sf(y)\psi(x^{-1}y)\overline{s^k} \pi(y,1)^* ds dy\\
  &= \int_G\left(\int_{\mathbb{T}}\overline{s^{(k-1)}}dt\right)f(y)\psi(x^{-1}y)\pi(x,1)^*dy = 0.
  \end{align*}
   So,  we  have
  \begin{align*}
      \{(x,t,\pi): G_\Psi F(x,t,\pi) \neq 0\}
     &=\{(x,t,\pi_\tau): G_\Psi F(x,t,\pi_\tau) \neq 0 \}\\
   & =\{ (x,t,\pi_\tau):G_\psi f(x,\tau)\neq 0 \}\\
   & \subseteq \mathbb{T}\times \{(x,\tau): G_\psi f(x,\tau) \neq 0\},\\
  \end{align*}
  which implies that $ m\times \mu  \{(x,t,\pi): G_\Psi F(x,t,\pi) \neq 0\} <\infty.$    
 So  $F = 0$ a.e. and it follows that $f = 0$ a.e.
\end{proof}
\begin{rem}
If we take $G = \mathbb{R}^2, \text{ and } \omega((x_1,y_1),(x_2,y_2)) = e^{-\iota(y_2x_1-x_2y_1)/2},$  Then  $ \mathbb{R}^2(\omega)$  is isomorphic to three dimensional nilpotent Lie group \cite{klep}, which is Heisenberg group. Hence, $(G,G^{\omega})$ has QUP for Gabor transform. 
\end{rem}

\section*{acknowledgements}
 The first author is supported by the Junior Research Fellowship of Council of Scientific and Industrial Research, India (Ref. No:21/12/2014(ii)EU-V).

\end{document}